\newtheorem{thm}{Theorem}[section]
\newtheorem{prop}[thm]{Proposition}
\newtheorem{lemma}[thm]{Lemma}
\newtheorem{cor}[thm]{Corollary}
\theoremstyle{definition}
\newtheorem{definition}[thm]{Definition}
\newtheorem{example}[thm]{Example}
\newtheorem{remark}[thm]{Remark}
\newcommand{\PP}{\mathbb{P}}
\DeclareMathOperator{\rank}{rank}
\DeclareMathOperator{\Pf}{Pf}
\newcommand{\defining}[1]{\textbf{#1}}
\title{Decompositions of ideals of minors meeting a submatrix}
\date{June 24, 2014}
\author{Kent M. Neuerburg}
\address{Kent M. Neuerburg \\
Department of Mathematics \\
SLU 10687 \\
Hamond, LA 70401 \\
USA}
\email{kneuerburg@selu.edu}
\author{Zach Teitler}
\address{Zach Teitler \\
Department of Mathematics \\
Boise State University \\
1910 University Drive \\
Boise, ID 83706 \\
USA}
\email{zteitler@boisestate.edu}
\subjclass[2010]{13F50, 13C40}
\keywords{Determinantal ideals, Pfaffian ideals, primary decomposition, straightening laws}
\thanks{The authors were partially supported by a grant from the University of Louisiana Board of Regents, LEQSF(2007-10)-RD-A-28.}
\begin{document}

\begin{abstract}
We compute the primary decomposition of certain ideals generated by
subsets of minors in a generic matrix or in a generic symmetric matrix,
or subsets of Pfaffians in a generic skew-symmetric matrix.
Specifically, the ideals we consider are generated by minors
that have at least some given number of rows and columns in certain submatrices.
\end{abstract}

\maketitle

\section{Introduction}

The paper \cite{MR853243} concerns ideals of minors fixing a submatrix,
meaning the set of minors in an $m \times n$ matrix that involve all $r$ of the first $r$ columns of the matrix.
One of the main results of that paper, Theorem~A, gives the primary decomposition of the ideal
generated by this set of minors.
We generalize this to consider minors that involve at least $r$ of the first $a$ columns:
\begin{thm}\label{thm: two block minors}
Let $k$ be a field, let $X$ be a \emph{generic} $m \times n$ matrix,
that is $X = (x_{i,j})$ for $1 \leq i \leq m$, $1 \leq j \leq n$,
and let $R = k[X] = k[x_{i,j}]$.
Regard $X$ as a block matrix, $X = (AB)$, where $A$ has size $m \times a$ and $B$ has size $m \times (n-a)$.
Let $J$ be the ideal generated by the set of $t$-minors of $X$ that involve at least $r$ columns of $A$,
let $I_t(X)$ be the ideal generated by the $t$-minors of $X$,
and similarly let $I_r(A)$ be the ideal generated by the $r$-minors of $A$.
Then $J = I_t(X) \cap I_r(A)$.
\end{thm}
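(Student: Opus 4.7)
The easy direction $J \subseteq I_t(X) \cap I_r(A)$ requires only Laplace expansion. Every generator of $J$ is a $t$-minor of $X$, hence lies in $I_t(X)$; and since such a generator $[I \,|\, T]$ satisfies $|T \cap \{1,\ldots,a\}| \geq r$, Laplace expansion along any $r$ of those columns writes the minor as a sum of products each containing an $r$-minor of $A$, placing it in $I_r(A)$.

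For the reverse inclusion I propose to invoke the straightening-law structure on $R = k[X]$ with respect to the poset $H$ of minors of $X$ (De Concini--Eisenbud--Procesi, Bruns--Vetter). Recall that $R$ has a $k$-basis of \emph{standard bitableaux} $\Sigma = \gamma_1 \gamma_2 \cdots \gamma_k$, written with the largest factor $\gamma_1$ on top, and that for any poset ideal (down-closed subset) $\Omega \subseteq H$, the ideal of $R$ generated by $\Omega$ has as a $k$-basis precisely the standard bitableaux whose top row $\gamma_1$ lies in $\Omega$. Applied here, $I_t(X)$ is generated by the poset ideal $\Omega_t = \{\text{minors of size} \geq t\}$, and $I_r(A)$ is generated by
\[
\Omega_r^A = \{[I \,|\, T] \in H : |I| \geq r,\ |T \cap \{1,\ldots,a\}| \geq r\}.
\]
So these ideals have explicit standard-bitableau bases $B_t$ and $B_r$, and by uniqueness of the standard expansion, $I_t(X) \cap I_r(A)$ has basis $B_t \cap B_r$: standard bitableaux whose top row $[I\,|\,T]$ satisfies both $|I| \geq t$ and $|T \cap \{1,\ldots,a\}| \geq r$.

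For any such top row, I would pick a subset $T_1 \subseteq T$ with $|T_1| = t$ and $|T_1 \cap \{1,\ldots,a\}| \geq r$ (choose $r$ elements of $T \cap \{1,\ldots,a\}$ and $t - r$ further elements of $T$). Laplace expansion along the $t$ columns $T_1$ yields
\[
[I \,|\, T] = \sum_{K \subseteq I,\ |K|=t} \pm\, [K\,|\,T_1] \cdot [I \setminus K \,|\, T \setminus T_1],
\]
in which each $[K\,|\,T_1]$ is a $t$-minor of $X$ using $\geq r$ columns of $A$, hence a generator of $J$. Therefore the top row itself lies in $J$, and so the entire bitableau $\Sigma = \gamma_1(\gamma_2 \cdots \gamma_k)$ lies in $J$, proving $I_t(X) \cap I_r(A) \subseteq J$.

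The main technical point to check is the standard-bitableau basis description of $I_r(A)$: that $\Omega_r^A$ is genuinely down-closed in $H$ (straightforward from the usual poset order on minors) and that the $R$-ideal it generates coincides with $I_r(A)$, which reduces to the same Laplace expansion as in the easy direction. Everything else is then a direct application of the standard straightening-law machinery.
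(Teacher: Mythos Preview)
Your proof is correct and follows essentially the same route as the paper. The paper packages the standard-monomial argument into a general lemma (if $I$ and $J$ are order ideals in the poset on which $A$ is an ASL, then $AI \cap AJ = A(I\cap J)$, quoting Bruns--Vetter, Prop.~5.2), and then simply observes that the generating sets of $I_t(X)$ and $I_r(A)$ are order ideals in $\Delta(X)$; your argument unwinds that lemma inline via the standard-bitableau basis and supplies the explicit Laplace step identifying $A(\Omega_t \cap \Omega_r^A)$ with $J$, which the paper leaves implicit. The substance is identical.
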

We generalize further than this, to allow several blocks
as well as restrictions on both rows and columns.
We also give similar statements for ideals generated by
sets of minors of a generic symmetric matrix,
requiring some number of rows or columns in certain submatrices.
Before we give these statements,
we consider one possible application in the setting of the two-block theorem above.

It is sometimes useful to consider, for a homogeneous ideal $I$,
the ideal $I_{\leq d}$ generated by the forms in $I$ of degree $\leq d$.
For example, in resolving the singularities of the affine cone $V(I) \subset \mathbb{A}^n$,
upon blowing up the origin, the total transform of $I$ may have embedded components
supported along the projective variety $V(I_{\leq d})$, lying in the exceptional divisor $\cong \mathbb{P}^{n-1}$,
for various $d$; see \cite{zct:mla}.
When $I$ is a determinantal ideal, generated by minors of a matrix whose entries are homogeneous forms,
then $I_{\leq d}$ is generated by just some of the minors of the matrix.

\begin{cor}
Let $X = (x_{i,j})$ be a generic $m \times n$ matrix,
regarded as consisting of two blocks, $X = (AB)$,
where $A$ has size $m \times a$ and $B$ has size $m \times (n-a)$.
Fix the ring $R = k[X] = k[x_{i,j}]$ 
where every entry in $A$ has degree $p$ and every entry in $B$ has degree $q > p$;
that is, $\deg(x_{i,j}) = p$ if $1 \leq j \leq a$, $\deg(x_{i,j}) = q$ otherwise.
Fix $t$ and $d$.
Let $I_t(X)_{\leq d}$ be the ideal generated by those $t$-minors of $X$
of degree less than or equal to $d$,
and for each $r$ let $I_r(A)$ be the ideal generated by the $r$-minors of $A$.
Then $I_t(X)_{\leq d} = I_t(X) \cap I_r(A)$ for $r = \lceil \frac{tq-d}{q-p} \rceil$.
\end{cor}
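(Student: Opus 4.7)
The plan is to reduce the corollary directly to Theorem \ref{thm: two block minors} by identifying $I_t(X)_{\leq d}$ with the ideal $J$ generated by $t$-minors that involve at least $r$ columns of $A$, for the given value of $r$.

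First I would compute the degree of a $t$-minor of $X$ in terms of how many of its columns lie in the $A$-block. Since each entry of $A$ has degree $p$ and each entry of $B$ has degree $q$, a $t$-minor using exactly $s$ columns from $A$ and $t-s$ columns from $B$ is a sum of products, each product containing $s$ entries from $A$ and $t-s$ entries from $B$, and therefore is homogeneous of degree
\[
s p + (t-s) q \;=\; tq - s(q-p).
\]

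Next I would determine precisely which $t$-minors satisfy the degree bound. The inequality $tq - s(q-p) \leq d$ is equivalent, since $q > p$, to
\[
s \;\geq\; \frac{tq - d}{q - p}.
\]
Because $s$ is an integer, this holds if and only if $s \geq \lceil (tq-d)/(q-p) \rceil = r$. Therefore the $t$-minors of $X$ of degree $\leq d$ are exactly those that involve at least $r$ columns of the submatrix $A$, so $I_t(X)_{\leq d}$ coincides with the ideal $J$ from Theorem \ref{thm: two block minors}.

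Finally, I would invoke Theorem \ref{thm: two block minors} with this value of $r$ to conclude $I_t(X)_{\leq d} = J = I_t(X) \cap I_r(A)$. There is no real obstacle here; the content of the corollary is entirely in the bookkeeping of the degree formula $tq - s(q-p)$ and the observation that the threshold $s \geq r$ coming from $d$ matches exactly the hypothesis of the theorem. The only mild subtlety is making sure the ceiling is the correct rounding (upward, because we need $s$ at least as large as the real-number bound), which the calculation above confirms.
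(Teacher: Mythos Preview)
Your proposal is correct and follows essentially the same approach as the paper: compute the degree $sp+(t-s)q$ of a $t$-minor with $s$ columns in $A$, observe that the bound $\leq d$ is equivalent to $s \geq \lceil (tq-d)/(q-p)\rceil = r$, and then invoke Theorem~\ref{thm: two block minors}. The paper does this in a single sentence immediately following the corollary; your version simply spells out the algebra more carefully.
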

Indeed, a $t \times t$ minor $M$ with $r$ columns in $A$ and $t-r$ columns in $B$ will have degree
$\deg M = pr+q(t-r)$; the value of $r$ in the statement is the least integral solution to $\deg M \leq d$.

\begin{cor}
Consider the vector bundles
$F = \mathcal{O}_{\PP^N}^m$, $G = \mathcal{O}_{\PP^N}(p)^{\oplus a} \oplus \mathcal{O}_{\PP^N}(q)^{\oplus n-a}$.
Let $f : F \to G$ be a general map,
let $f' : F \to \mathcal{O}_{\PP^N}(p)^{\oplus a}$ be the induced map,
let $\Delta_t(f)$ be the degeneracy locus, $\Delta_t(f) = \{ x \in \PP^N \mid \rank f_x < t \}$,
and let $\Delta_t(f)_{\leq d}$ be the locus defined by the ideal $I(\Delta_t(f))_{\leq d}$.
Then $\Delta_t(f)_{\leq d} = \Delta_t(f) \cup \Delta_r(f')$
for $r = \lceil \frac{tq-d}{q-p} \rceil$.
\end{cor}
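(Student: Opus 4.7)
The plan is to observe that this corollary is a direct geometric reformulation of the preceding one. First, I would identify the setup: a general map $f : F \to G$ of the stated form is represented by an $m \times n$ matrix whose entries in the first $a$ columns are general forms of degree $p$ on $\PP^N$ and whose remaining entries are general forms of degree $q$. This is precisely the setup of the preceding corollary, realized through the specialization $\varphi : R = k[x_{i,j}] \to S = k[y_0, \ldots, y_N]$ sending each $x_{i,j}$ to the corresponding (general) entry of the matrix of $f$, where $\varphi$ is graded when $R$ is given the grading of the preceding corollary and $S$ carries its standard grading. With this identification, $\Delta_t(f) = V(\varphi(I_t(X)) \cdot S)$, the induced map $f'$ corresponds to the submatrix $A$, and $\Delta_r(f') = V(\varphi(I_r(A)) \cdot S)$.

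Next I would verify that the ideal defining $\Delta_t(f)_{\leq d}$ is precisely the extension $\varphi(I_t(X)_{\leq d}) \cdot S$. By definition $I(\Delta_t(f))$ is generated by the $\varphi$-images of the $t$-minors of $X$, so a homogeneous element of degree $e$ in $I(\Delta_t(f))$ can be written as an $S$-linear combination of $\varphi$-images of $t$-minors of degree $\leq e$, the coefficients having nonnegative degree. Consequently, the subideal generated by degree-$\leq d$ elements of $I(\Delta_t(f))$ coincides with the ideal generated by the $\varphi$-images of the $t$-minors of $X$ of degree $\leq d$, which is exactly $\varphi(I_t(X)_{\leq d}) \cdot S$.

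Finally I would apply the preceding corollary, which gives $I_t(X)_{\leq d} = I_t(X) \cap I_r(A)$ for $r = \lceil (tq-d)/(q-p) \rceil$, and pass to zero loci in $\PP^N$. Using that for any ring map $\varphi : R \to S$ and any ideals $I, J \subseteq R$ one has
$$V((I \cap J) \cdot S) = \varphi^{-1}(V(I \cap J)) = \varphi^{-1}(V(I) \cup V(J)) = V(I \cdot S) \cup V(J \cdot S),$$
I conclude that $\Delta_t(f)_{\leq d} = \Delta_t(f) \cup \Delta_r(f')$.

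The only substantive step is the verification in the second paragraph: one must check that low-degree forms on $\PP^N$ vanishing on $\Delta_t(f)$ really arise from low-degree $t$-minors of $X$, rather than appearing as unexpected cancellations of higher-degree specialized minors. The homogeneous-components argument above, which relies on the coefficients having nonnegative degree, handles this cleanly; everything else is a straightforward translation between ideals in $R$ and their extensions to $S$.
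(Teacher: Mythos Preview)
The paper does not supply a proof of this corollary; it is stated immediately after the preceding corollary and evidently intended as its direct geometric translation, with only a reference to \cite{zct:smoothness-of-envelopes} for a related special case. Your proposal carries out precisely that translation and is correct.

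One small point worth making explicit: you write ``by definition $I(\Delta_t(f))$ is generated by the $\varphi$-images of the $t$-minors of $X$.'' This is the natural scheme-theoretic (Fitting) ideal of the degeneracy locus, and is clearly what the paper intends; but if one reads $I(\Delta_t(f))$ as the radical ideal of the set $\Delta_t(f)$, then an additional appeal to genericity of $f$ (so that the specialized determinantal ideal remains radical) is needed. With the Fitting-ideal reading, your degree-component argument in the second paragraph is exactly the right justification that $(I(\Delta_t(f)))_{\leq d}$ coincides with the extension of $I_t(X)_{\leq d}$, and the passage to zero loci via $V((I\cap J)S) = V(IS)\cup V(JS)$ is standard.
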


This is similar to \cite{zct:smoothness-of-envelopes},
which dealt with $\PP^2$ and had $n = m+1$ in order to obtain general Hilbert-Burch matrices of a given type.
(In particular \cite[Prop.~3.4]{zct:smoothness-of-envelopes} simply recreated
a special case of \cite[Thm.~A]{MR853243}.)

Sections~\ref{section: orders and straightening} and \ref{section: order ideals}
review some background of posets, dosets,
algebras with straightening law, and doset algebras with straightening law.
Then we give our results for minors in generic matrices (Section~\ref{section: minors}),
minors in generic symmetric matrices (Section~\ref{section: symmetric minors}),
and Pfaffians in generic skew-symmetric matrices (Section~\ref{section: pfaffians}).

Throughout, all rings are commutative with unity.


\section{Orders and straightening}\label{section: orders and straightening}

A \defining{poset} (partially ordered set)
is a set together with a transitive, reflexive, antisymmetric relation $\leq$.

\begin{definition}[{\cite[Definition~1.0.3]{MR2298637}}]
A \defining{doset} of a poset $P$ is a subset $D \subset P \times P$ such that
\begin{enumerate}
\item $(a,a) \in D$ for all $a \in P$,
\item if $(a,b) \in D$ then $a \leq b$, and
\item if $a \leq b \leq c \in P$, then $(a,c) \in D$ if and only if $(a,b) \in D$ and $(b,c) \in D$.
\end{enumerate}
\end{definition}

\begin{example}
\begin{enumerate}
\item Let $[n] = \{1,\dotsc,n\}$
and let $P_n = 2^{[n]}$, the power set of $[n]$.
We order $P_n$ as follows.
For $A = \{a_1 < \dotsb < a_s\} \subset [n]$
and $B = \{b_1 < \dotsb < b_t\} \subset [n]$,
$A \leq B$ if and only if $s \geq t$ and $a_i \leq b_i$ for $i = 1,\dotsc,t$.
This makes $P_n$ a poset.

Note, $A \leq B$ if and only if in the diagram
\[
\begin{array}{|c|c|c|c|c|c|}
  \hline
  a_1 & a_2 & \cdots & a_t & \cdots & a_s \\
  \hline
  b_1 & b_2 & \cdots & b_t \\
  \cline{1-4}
\end{array}
\]
the first row is at least as long as the second
and the entries are weakly increasing down each column.

\item Fix $m$ and $n$.
Let $P_{m,n} \subset P_m \times P_n$
consist of pairs of subsets $(A,B)$ such that $|A|=|B|$,
with $(A,B) \leq (A',B')$ if and only if $A \leq A'$ and $B \leq B'$.

\item Let $D_n \subset P_n \times P_n$
consist of pairs $(A,B)$ such that $|A|=|B|$ and $A \leq B$.
Then $D_n$ is a doset.
\end{enumerate}
\end{example}

\begin{example}\label{example: posets of minors}
Here are some key examples of posets and dosets of minors in matrices.
\begin{enumerate}
\item $P_{m,n}$ is the \defining{poset of minors} (of an $m \times n$ matrix):
$(A,B) \in P_{m,n}$ corresponds to the minor with rows indexed by $A$ and columns indexed by $B$.
This poset is usually denoted $\Delta(X)$, where $X$ is an $m \times n$ matrix.

\item $D_n$ is the \defining{doset of minors} of a symmetric $n \times n$ matrix:
$(A,B) \in D_n$ corresponds to the minor with rows indexed by $A$ and columns indexed by $B$.
An element of $D_n$ is called a \defining{doset minor}.
We denote this poset $\Delta^s(Y)$, where $Y$ is a symmetric $n \times n$ matrix.
(It is denoted $\Delta(Y)$ in \cite{MR2298637};
we adjoin the $s$ for ``symmetric'' in order to avoid ambiguity.)

The condition $A \leq B$ means that a minor is a doset minor if and only if
the main diagonal of the minor lies in the upper triangle of the matrix (including the diagonal).

\item Let $P_n(2)$ be the subset of $A \in P_n$ such that $|A|$ is even.
Then $P_n(2)$ is the \defining{poset of Pfaffians} of a skew-symmetric $n \times n$ matrix:
$A \in P_n(2)$ corresponds to the Pfaffian of the submatrix with rows and columns indexed by $A$.
Following \cite{MR2298637} we denote this poset $\Pi(Z)$, where $Z$ is a skew-symmetric $n \times n$ matrix.
\end{enumerate}
\end{example}

\begin{definition}[{\cite[\textsection4.A]{MR953963},\cite[Definition~1.0.1]{MR2298637}}]
Let $A$ be a $B$-algebra and $P \subset A$ a subset with a partial order $\leq$.
Then $A$ is a \defining{graded algebra with straightening law} (abbreviated ASL) on $P$ over $B$ if
\begin{enumerate}
\item $A = \bigoplus_{i \geq 0} A_i$ is a graded $B$-algebra such that $A_0 = B$,
$P$ consists of homogeneous elements of positive degree, and $P$ generates $A$ as a $B$-algebra.
\item $A$ is a free $B$-module with a basis given by products
$\xi_1 \dotsm \xi_m$, $m \geq 0$, $\xi_i \in P$, such that $\xi_1 \leq \dotsb \leq \xi_m$.
These products are called \defining{standard monomials}.
\item For all incomparable $\xi, \nu \in P$, the product $\xi \nu$ can be written as a combination
of standard monomials
\[
  \xi \nu = \sum a_\mu \mu, \qquad a_\mu \in B, a_\mu \neq 0, \qquad \text{$\mu$ standard monomial},
\]
in which every $\mu$ contains a factor $\zeta \in P$ such that $\zeta \leq \xi$ and $\zeta \leq \nu$.
These are called \defining{straightening relations}.
\end{enumerate}
\end{definition}

\begin{definition}[{\cite[Definition~1.0.4]{MR2298637}}]
Let $A$ be a $B$-algebra and $D \subset A$ a subset such that $D$ is a doset of a poset $P$.
Then $A$ is a \defining{graded doset algebra with straightening law} (abbreviated DASL)
on $D$ over $B$ if
\begin{enumerate}
\item $A = \bigoplus_{i \geq 0} A_i$ is a graded $B$-algebra such that $A_0 = B$,
$D$ consists of homogeneous elements of positive degree, and $D$ generates $A$ as a $B$-algebra.
\item $A$ is a free $B$-module with a basis given by products
$(\alpha_1,\alpha_2)\dotsm(\alpha_{2k-1},\alpha_{2k})$, $k \geq 1$, $(\alpha_{2i-1},\alpha_{2i}) \in D$,
$\alpha_1 \leq \dotsb \leq \alpha_{2k}$.
These products are called \defining{standard monomials}.
\item Suppose $M = (\alpha_1,\alpha_2)\dotsm(\alpha_{2k-1},\alpha_{2k})$,
with \defining{standard representation} $M = \sum \lambda_N N$, $0 \neq \lambda_N \in B$,
each $N$ a standard monomial.
Let $N = (\beta_1,\beta_2)\dotsm(\beta_{2\ell-1},\beta_{2\ell})$ be one of the standard monomials
appearing in the standard representation of $M$.
Then for every permutation $\sigma$ of $\{1,\dotsc,2k\}$,
the sequence $\{\alpha_{\sigma(1)},\dotsc,\alpha_{\sigma(2k)}\}$
is lexicographically greater than or equal to the sequence $(\beta_1,\dotsc,\beta_{2\ell})$.
\item In the notation above, if there is a permutation $\sigma$ such that
$\alpha_{\sigma(1)} \leq \dotsb \leq \alpha_{\sigma(2k)}$
then the standard monomial $(\alpha_{\sigma(1)},\alpha_{\sigma(2)})\dotsm(\alpha_{\sigma(2k-1)},\alpha_{\sigma(2k)})$
must appear in the standard representation of $M$ with coefficient $\pm1$.
\end{enumerate}
\end{definition}

\begin{example}
Fix an arbitrary commutative ring $B$ with unity.
\begin{enumerate}
\item Let $X$ be an $m \times n$ \defining{generic matrix}, that is $X = (x_{i,j})$,
$1 \leq i \leq m$, $1 \leq j \leq n$, the $x_{i,j}$ variables over $B$.
Then $A = B[X] = B[x_{i,j}]$ is a graded ASL on $\Delta(X)$ over $B$
\cite[Chap.~4]{MR953963},\cite[Thm.~1.0.5]{MR2298637}.

\item Let $Y$ be an $n \times n$ \defining{generic symmetric matrix},
that is $Y = (y_{i,j})$, $1 \leq i, j \leq n$, $y_{i,j} = y_{j,i}$.
Then $A = B[Y]$ is a graded DASL on $\Delta^s(Y)$ over $B$
\cite[Thm.~1.0.10]{MR2298637}.

\item Let $Z$ be an $n \times n$ \defining{generic skew-symmetric matrix},
that is $Z = (z_{i,j})$, $1 \leq i,j \leq n$, $z_{i,j} = -z_{j,i}$, $z_{i,i} = 0$.
Then $A = B[Z]$ is a graded ASL on $\Pi(Z)$ over $B$
\cite[Thm.~1.0.14]{MR2298637}.
\end{enumerate}
\end{example}

\section{Order ideals}\label{section: order ideals}
We use ASLs and DASLs entirely for the following properties.

\begin{definition}
Let $P$ be a poset.
An \defining{order ideal} is a subset $I \subset P$
such that if $\alpha \in I$ and $\beta \leq \alpha$
then $\beta \in I$.
The order ideal \defining{generated by} $S \subset P$ is the smallest order ideal containing $S$,
that is, $\{ \alpha \in P \mid \alpha \leq s \text{ for some $s \in S$} \}$.
The order ideal \defining{cogenerated by} $S \subset P$ is the largest order ideal disjoint from $S$,
that is, $\{ \alpha \in P \mid \alpha \not\leq s \text{ for all $s \in S$} \}$.
\end{definition}

When $A$ is an ASL on $P$ and $I \subset P$, we write $AI$ for the (ring) ideal generated by $I$.

\begin{lemma}[{\cite[Prop.~5.2]{MR953963}}]\label{lemma: ASL order ideal intersection}
Let $A$ be an ASL on $P$ and let $I, J \subset P$ be ideals.
Then $AI \cap AJ = A(I \cap J)$.
\end{lemma}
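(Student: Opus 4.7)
The plan is to exhibit an explicit $B$-basis for $AI$ consisting of standard monomials satisfying a simple combinatorial condition, and then reduce the intersection identity to a set-theoretic identity of index sets using freeness of $A$ over $B$.

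First I would prove the key claim: for any order ideal $I \subset P$, the ring ideal $AI$ is a free $B$-module with basis
\[
\mathcal{B}(I) = \{\,\xi_1 \leq \dotsb \leq \xi_m \ :\ m \geq 1,\ \xi_i \in P,\ \xi_1 \in I\,\}.
\]
Because $\xi_1$ is the minimum among the factors of a standard monomial and $I$ is down-closed, the condition $\xi_1 \in I$ is equivalent to the statement that some factor of the standard monomial lies in $I$. The containment $B\cdot\mathcal{B}(I) \subseteq AI$ is then immediate. For the reverse containment it suffices, by linearity, to show that $\alpha\mu \in B\cdot\mathcal{B}(I)$ for every $\alpha \in I$ and every standard monomial $\mu = \nu_1 \leq \dotsb \leq \nu_k$. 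If $\alpha \leq \nu_1$ the product is already a standard monomial in $\mathcal{B}(I)$. Otherwise I would apply the straightening relation of axiom (3) of the ASL definition to the first incomparable pair involving $\alpha$, rewriting the product as a $B$-linear combination of standard monomials each containing a factor $\zeta$ with $\zeta \leq \alpha$; since $\alpha \in I$ and $I$ is an order ideal, each such $\zeta$ belongs to $I$, so each resulting standard monomial lies in $\mathcal{B}(I)$. Iterating this procedure and invoking a suitable termination argument (for example, induction on the lexicographically ordered multiset of factors, which is the standard device used to establish the straightening algorithm) completes the reduction. Linear independence of $\mathcal{B}(I)$ is inherited from the free standard monomial basis of $A$.

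With the basis description in hand, the lemma follows at once. The subfamilies $\mathcal{B}(I), \mathcal{B}(J), \mathcal{B}(I\cap J)$ consist of distinct vectors in the free basis of $A$, and clearly
\[
\mathcal{B}(I) \cap \mathcal{B}(J) = \mathcal{B}(I \cap J),
\]
because $\xi_1 \in I$ and $\xi_1 \in J$ if and only if $\xi_1 \in I \cap J$. Intersection of free submodules spanned by subsets of a common basis equals the span of the intersection of the subsets, so
\[
AI \cap AJ \;=\; B\cdot\mathcal{B}(I) \,\cap\, B\cdot\mathcal{B}(J) \;=\; B\cdot\mathcal{B}(I\cap J) \;=\; A(I\cap J).
\]

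The main obstacle is the inductive step establishing the basis description of $AI$: one must choose a well-founded measure on products of poset elements so that every application of a straightening relation strictly decreases the measure, while simultaneously verifying that the invariant ``some factor lies in $I$'' is preserved under the replacement. This bookkeeping is the technical heart of the argument; once it is in place, the rest of the proof is purely formal manipulation of free $B$-modules.
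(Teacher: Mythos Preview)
Your proposal is correct and follows the standard approach. The paper does not actually prove this lemma---it is quoted from Bruns--Vetter---but the paper does prove the DASL analogue (Lemmas~\ref{lemma: DASL order ideal intersection} and the lemma preceding it) by exactly your two-step strategy: first show that $AI$ is spanned by those standard monomials whose least factor lies in $I$, then read off the intersection identity from the set-theoretic equality $\mathcal{B}(I)\cap\mathcal{B}(J)=\mathcal{B}(I\cap J)$ inside the free basis. One small remark: rather than iterating the two-term straightening relation by hand, you can appeal directly to the stronger consequence (Bruns--Vetter, Prop.~4.1) that in the standard representation of any product $p_1\dotsm p_m$ every standard monomial that appears has its least factor $\leq p_i$ for all $i$; this absorbs the termination bookkeeping you flag as the main obstacle and is exactly how the paper's DASL proof proceeds via the lexicographic axiom.
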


We will prove a similar lemma for DASLs.
First, we introduce a partial order for dosets.

\begin{definition}
Let $D$ be a doset of $P$.
Then $D$ is a poset with the partial order $(a,b) \leq_1 (c,d)$ if and only if $a \leq c$ in $P$.
%
A \defining{doset order ideal} is an order ideal in the poset $(D,\leq_1)$.
As before, the ideal generated by $S \subset D$ is the smallest ideal containing $S$
and the ideal cogenerated by $S \subset D$ is the largest ideal disjoint from $S$.
\end{definition}

A DASL on $D$ is not necessarily an ASL on $(D,\leq_1)$.
Again when $A$ is a DASL on $D$ and $I \subset D$ is a doset order ideal,
we write $AI$ for the ring ideal generated by $I$.

\begin{lemma}
Let $A$ be a DASL on $D$ over $B$ and let $I \subset D$ be a doset order ideal.
Then $AI$ is spanned over $B$ by the standard monomials
$N = (\beta_1,\beta_2)\dotsm(\beta_{2\ell-1},\beta_{2\ell})$
such that $(\beta_1,\beta_2) \in I$.
\end{lemma}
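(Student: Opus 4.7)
The plan is to prove the two inclusions separately, with $\supseteq$ essentially trivial and $\subseteq$ requiring only axioms (2) and (3) of the DASL definition together with the description of the $\leq_1$ order on $D$.

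For $\supseteq$, any standard monomial $N = (\beta_1,\beta_2)(\beta_3,\beta_4)\cdots(\beta_{2\ell-1},\beta_{2\ell})$ with $(\beta_1,\beta_2) \in I$ factors as $N = (\beta_1,\beta_2) \cdot N'$ with $N' \in A$, so $N \in AI$.

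For $\subseteq$, I would first reduce to a convenient set of spanning elements. Since $D$ generates $A$ as a $B$-algebra, the ring ideal $AI$ is spanned over $B$ by products
\[
M = (\alpha_1,\alpha_2)(\alpha_3,\alpha_4)\cdots(\alpha_{2k-1},\alpha_{2k})
\]
in which at least one factor lies in $I$; by commutativity I may reorder and assume $(\alpha_1,\alpha_2) \in I$. By axiom (2), $M$ has a standard representation $M = \sum \lambda_N N$, so it suffices to show that every standard monomial $N = (\beta_1,\beta_2)\cdots(\beta_{2\ell-1},\beta_{2\ell})$ appearing in this representation satisfies $(\beta_1,\beta_2) \in I$.

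For this I would invoke axiom (3) with $\sigma$ equal to the identity: the sequence $(\alpha_1,\alpha_2,\dots,\alpha_{2k})$ is lexicographically $\geq (\beta_1,\beta_2,\dots,\beta_{2\ell})$. Reading off the leading entries gives $\beta_1 \leq \alpha_1$ in $P$, so $(\beta_1,\beta_2) \leq_1 (\alpha_1,\alpha_2)$. Because $(\alpha_1,\alpha_2) \in I$ and $I$ is a doset order ideal with respect to $\leq_1$, this forces $(\beta_1,\beta_2) \in I$, as required.

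The main obstacle is nothing delicate but rather the correct unpacking of axiom (3): one must be careful that "lexicographic $\geq$" on sequences in the poset $P$ really does yield the honest inequality $\beta_1 \leq \alpha_1$ on first entries (one may either fix a linear extension of $P$ underlying the lex order, or observe that agreement or strict inequality at the first differing coordinate always produces a comparison $\beta_1 \leq \alpha_1$ in $P$). Once this first-coordinate inequality is secured, the definition of $\leq_1$ immediately converts it into the desired membership in the doset order ideal $I$.
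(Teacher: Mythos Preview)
Your proof is correct and follows essentially the same route as the paper: reduce elements of $AI$ to products of doset elements with a factor $(\alpha_1,\alpha_2)\in I$, apply axiom~(3) to conclude $\beta_1\leq\alpha_1$, and then use the definition of $\leq_1$ to get $(\beta_1,\beta_2)\in I$. Your version is slightly more explicit about the $\supseteq$ direction and about which permutation is invoked in axiom~(3), but the substance is identical to the paper's argument.
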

\begin{proof}
Let $(\alpha_1,\alpha_2) \in I$, $f \in A$, and let
$N = (\beta_1,\beta_2)\dotsm(\beta_{2\ell-1},\beta_{2\ell})$
be one of the standard monomials appearing in the standard representation of $(\alpha_1,\alpha_2) f$.
The sequence $(\beta_1,\beta_2,\dotsc,\beta_{2\ell})$ is lexicographically less than or equal
to $(\alpha_1,\alpha_2)$,
so in particular $\beta_1 \leq \alpha_1$.
Hence $(\beta_1,\beta_2) \leq_1 (\alpha_1,\alpha_2)$ and hence $(\beta_1,\beta_2) \in I$.
Thus every standard monomial appearing in every element of $AI$ has a factor in $I$.
\end{proof}

\begin{lemma}\label{lemma: DASL order ideal intersection}
Let $A$ be a DASL on $D$ and let $I, J \subset D$ be ideals.
Then $AI \cap AJ = A(I \cap J)$.
\end{lemma}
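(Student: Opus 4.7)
The plan is to mimic the proof of Lemma~\ref{lemma: ASL order ideal intersection} for ASLs, using the previous lemma as the crucial structural input. The inclusion $A(I \cap J) \subset AI \cap AJ$ is immediate since $I \cap J \subset I$ and $I \cap J \subset J$, so the content is in the reverse inclusion.

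For the reverse inclusion, I would take $f \in AI \cap AJ$ and write $f$ in its (unique) standard representation $f = \sum \lambda_N N$, where each $N = (\beta_1,\beta_2)\dotsm(\beta_{2\ell-1},\beta_{2\ell})$ is a standard monomial and $\lambda_N \in B$ is nonzero. Since the standard monomials form a free $B$-basis of $A$, this representation is unique, so the fact that $f \in AI$ combined with the previous lemma forces $(\beta_1,\beta_2) \in I$ for every $N$ appearing; likewise $f \in AJ$ forces $(\beta_1,\beta_2) \in J$. Hence $(\beta_1,\beta_2) \in I \cap J$ for every $N$, which means each $N$ lies in $A(I \cap J)$, and therefore $f \in A(I \cap J)$.

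The only real subtlety I need to be careful about is the uniqueness argument: the previous lemma says that \emph{every} element of $AI$ has a standard representation in which every standard monomial has its first factor in $I$, and freeness of the standard monomial basis ensures this ``$I$-supported'' representation coincides with the unique standard representation of $f$. So if $f$ is simultaneously in $AI$ and $AJ$, the same standard representation must be both $I$-supported and $J$-supported, hence $(I \cap J)$-supported. No calculation with straightening relations is needed beyond what was already done in the previous lemma. I do not anticipate a genuine obstacle; the proof is essentially one paragraph once the previous lemma is in hand.
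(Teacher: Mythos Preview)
Your proposal is correct and matches the paper's proof essentially verbatim: both use the previous lemma plus uniqueness of the standard representation to conclude that each standard monomial appearing in an element of $AI \cap AJ$ has its first factor in $I \cap J$, with the reverse inclusion being obvious. The paper's version is simply terser.
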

\begin{proof}
A standard monomial $N = (\beta_1,\beta_2)\dotsm(\beta_{2\ell-1},\beta_{2\ell})$
appearing in the standard representation of an element of $AI \cap AJ$
has $(\beta_1,\beta_2) \in I$ and $\in J$, hence in $I \cap J$.
This shows $AI \cap AJ \subset A(I \cap J)$ and the reverse inclusion is obvious.
\end{proof}

Finally we recall the following results.

\begin{prop}[{\cite[Thm.~6.3]{MR953963}}]
Let $B$ be a domain, $X$ a generic matrix, $A = B[X]$, and $\delta \in \Delta(X)$,
the poset of minors (see Example~\ref{example: posets of minors}(1)).
Let $I(X,\delta)$ be the ideal in $A$ generated by the order ideal cogenerated by $\delta$.
Then $I(X,\delta)$ is a prime ideal.
\end{prop}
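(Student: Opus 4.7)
The plan is to exploit the ASL structure on $A = B[X]$ to realize $A/I(X,\delta)$ as a $B$-subalgebra of a localization that is evidently a domain. By the ASL analogue of the spanning lemma preceding Lemma~\ref{lemma: DASL order ideal intersection} (whose proof is actually easier in the ASL case, since the straightening law directly produces monomials whose smallest factor is $\leq$ each original factor), $I(X,\delta)$ is spanned over $B$ by the standard monomials whose smallest factor lies in the cogenerated order ideal. Consequently $A/I(X,\delta)$ is a free $B$-module on the standard monomials whose every factor is comparable to $\delta$ in the manner prescribed by the cogeneration condition.

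From this description, the image $\bar\delta$ of $\delta$ in $A/I(X,\delta)$ is a nonzerodivisor: multiplication by $\delta$ carries a standard basis monomial $\mu$ to $\mu\delta$, which again is standard (no straightening is required, because each factor of $\mu$ is already appropriately comparable to $\delta$), and the induced $B$-linear map on the free module is injective.

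Having secured that $\bar\delta$ is a nonzerodivisor, I would invert it. After relabeling to place $\delta$ as the upper-left $t \times t$ minor of $X$, invertibility of $\delta$ in $A[\delta^{-1}]$ enables row and column operations---multiplying by $\delta^{-1}$ times cofactor expressions---that reduce $X$, in new coordinates, to block form with an identity block in the upper-left, a residual $(m-t) \times (n-t)$ generic matrix $X'$ in the lower-right, and zero elsewhere. A direct computation then identifies $(A/I(X,\delta))[\bar\delta^{-1}]$ with an analogous localized ring $B[X'][\bar\delta'^{\,-1}]/I(X',\delta')$ for some smaller $\delta' \in \Delta(X')$. Induction on the size of $\delta$ in the poset (with base case $\delta$ maximal, so $I(X,\delta) = 0$, which is prime since $B$ is a domain) shows that this localization is a domain, and the embedding $A/I(X,\delta) \hookrightarrow (A/I(X,\delta))[\bar\delta^{-1}]$ completes the argument.

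The hard part is the final localization step: one must verify that after the row/column operations, the cogeneration condition on minors of $X$ translates precisely into an analogous cogeneration condition on minors of the residual matrix $X'$, so that the induction closes. This combinatorial matching---together with tracking how the generating minors of $I(X,\delta)$ transform under the change of coordinates---is the technical core of the classical argument of Bruns and Vetter.
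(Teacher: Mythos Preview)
The paper does not prove this proposition; it simply records it with a citation to Bruns--Vetter \cite[Thm.~6.3]{MR953963}. Your proposal is an outline of precisely that classical argument, and the first two steps are correct: the quotient $A/I(X,\delta)$ is an ASL on $\{\gamma \in \Delta(X) : \gamma \geq \delta\}$, and since $\delta$ is the minimum of this subposet, multiplication by $\bar\delta$ sends standard monomials to standard monomials and is therefore injective.

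One caution about your localization step. The phrase ``relabeling to place $\delta$ as the upper-left $t\times t$ minor'' is not available in general: the partial order on $\Delta(X)$ depends on the ordering of rows and columns, so a row/column permutation does not carry $I(X,\delta)$ to $I_t(X)$. (For instance, with $m=n=3$ and $\delta = [1,3\mid 1,3]$, the minor $[1,2\mid 1,2]$ lies in $I(X,\delta)$, which no determinantal ideal $I_t(X)$ reproduces.) The block-reduction picture you describe is really the special case $\delta = [1,\dotsc,t\mid 1,\dotsc,t]$. Bruns--Vetter's actual induction handles general $\delta = [a_1,\dotsc,a_r\mid b_1,\dotsc,b_r]$ differently: rather than clearing $t$ rows and columns at once, one inverts a carefully chosen element and identifies the localization with a polynomial extension of $R(X';\delta')$ for a matrix with one fewer row and column and a $\delta'$ of length $r-1$, inducting on $m+n$. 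Your final paragraph correctly flags this as the technical core; just be aware that the combinatorial matching you would need to carry out does not go through the upper-left-block reduction you sketched.
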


\begin{prop}[{\cite[Theorem 1]{MR0352082}, \cite[Remark~2.5(a)]{MR1279266}}]
\label{prop: doset 1-cogenerated prime}
Let $B$ be a domain, $Y$ a generic symmetric matrix, $A = B[Y]$,
and $\delta \in \Delta^s(Y)$,
the doset of minors (see Example~\ref{example: posets of minors}(2)).
Let $I(Y,\delta)$ be the ideal in $A$ generated by the doset order ideal cogenerated by $\delta$.
Then $I(Y,\delta)$ is a prime ideal.
\end{prop}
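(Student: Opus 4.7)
The strategy is to prove the quotient $A/I(Y,\delta)$ is an integral domain. By the lemma immediately preceding Lemma~\ref{lemma: DASL order ideal intersection}, $I(Y,\delta)$ has a $B$-module basis consisting of the standard monomials $(\beta_1,\beta_2)\cdots(\beta_{2\ell-1},\beta_{2\ell})$ whose leading doset factor $(\beta_1,\beta_2)$ lies in the cogenerated doset order ideal. Hence $A/I(Y,\delta)$ has a $B$-basis given by the residue classes of the remaining standard monomials, namely those whose leading factor lies in the complementary sub-poset $D_\delta \subset \Delta^s(Y)$ determined by $\delta$.

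Next, I would use axioms (3) and (4) of the DASL definition to check that this quotient inherits a doset-algebra structure on $D_\delta$. The content is that when two doset minors in $D_\delta$ are multiplied and the product is rewritten in the standard basis, axiom (3) forces the lexicographically leading factor of every term to remain in $D_\delta$, so the sub-DASL axioms close up. Primality of $I(Y,\delta)$ is then equivalent to this sub-DASL being a domain.

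To finish, I would exhibit $A/I(Y,\delta)$ as a subring of a polynomial domain. Two natural routes: (i) induction on the size of $Y$ or on the position of $\delta$ in $\Delta^s(Y)$, following Kutz \cite{MR0352082}, using a principal localization at a suitably chosen diagonal doset minor to peel off rows and columns and reduce to a smaller generic symmetric matrix; or (ii) a Gr\"obner-basis argument in the spirit of \cite{MR1279266}, choosing a term order whose initial ideal of $I(Y,\delta)$ is the squarefree monomial ideal of a shellable simplicial complex, from which primality can be deduced via a dimension count together with the standard-monomial basis description.

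The main obstacle is this last step. In the generic-matrix (ASL) case the analogous primality is cleaner because the poset $P_{m,n}$ decomposes as a product of row and column orders and one can localize at a single minor to reduce the size; in the symmetric case the doset relation $(A,B)$ with $A \leq B$ couples rows and columns, so the straightening is quadratic in the doset generators, and the induction or term-order must be coordinated with a diagonal of $Y$ to preserve the symmetric structure throughout. This is why primality in the symmetric/doset setting genuinely requires the work done in \cite{MR0352082,MR1279266}, rather than being formal from the DASL axioms alone.
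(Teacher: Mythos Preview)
The paper does not give its own proof of this proposition: it is recorded with attribution to \cite[Theorem~1]{MR0352082} and \cite[Remark~2.5(a)]{MR1279266} and then used as a black box. Your outline points to the same two sources and, in its final paragraph, correctly concedes that the primality is not a formal consequence of the DASL axioms but genuinely requires the work of Kutz or Conca. In that sense you and the paper are in agreement: neither supplies an independent argument.

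One technical caution about your middle paragraph. Axiom~(3) of a DASL says that for each standard monomial $N=(\beta_1,\beta_2)\dotsm(\beta_{2\ell-1},\beta_{2\ell})$ appearing in the straightening of $M$, the sequence $(\beta_1,\dotsc,\beta_{2\ell})$ is lexicographically \emph{below} every permutation of the $\alpha$'s. That bounds $\beta_1$ from above, not from below, so it does not by itself force $(\beta_1,\beta_2)$ to remain in the upward-closed set $D_\delta$. What actually happens is that any term whose leading factor drops into the cogenerated ideal becomes zero in the quotient, and one then has to verify that the surviving relations still satisfy the DASL axioms on the residual doset; this is the analogue of \cite[Prop.~5.1]{MR953963} in the ASL case, but in the doset setting it is part of the work you are deferring to \cite{MR0352082,MR1279266}. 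Your instinct in the last paragraph---that the coupling of rows and columns via $A\leq B$ makes the symmetric case genuinely harder---is exactly right, and is the reason the paper simply cites the result.
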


\begin{prop}[{\cite[Thm.~2.1.12]{MR2298637}}]
Let	$B$ be a domain, $Z$ a generic skew-symmetric matrix, $A = B[Z]$,
and $\delta \in \Pi(Z)$,
the poset of Pfaffians (see Example~\ref{example: posets of minors}(3)).
Let $I(Z,\delta)$ be the ideal in $A$ generated by the order ideal cogenerated by $\delta$.
Then $I(Z,\delta)$ is a prime ideal.
\end{prop}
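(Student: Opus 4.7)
The plan is to prove $A/I(Z,\delta)$ is a domain. I would exploit the ASL structure of $A = B[Z]$ on $\Pi(Z)$ stated in the previous section, first to give a combinatorial $B$-basis for the quotient, then to embed the quotient into a polynomial ring via a specialization of the entries of $Z$.

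First I would invoke the ASL analogue of our Lemma~\ref{lemma: DASL order ideal intersection} (see \cite[Prop.~5.1]{MR953963}): writing $\Omega$ for the order ideal cogenerated by $\delta$, the ideal $I(Z,\delta) = A\Omega$ is spanned over $B$ by the standard monomials $\xi_1 \leq \dotsb \leq \xi_m$ whose least factor $\xi_1$ lies in $\Omega$. Consequently $A/I(Z,\delta)$ is a free $B$-module with basis the standard monomials whose factors all lie in $\Pi(Z) \setminus \Omega$, and it in fact inherits the structure of a graded ASL on this complementary subposet over $B$. Next I would construct a $B$-algebra map $\varphi : A \to B[Z']$, where $Z'$ is a smaller generic skew-symmetric matrix, by setting a selected collection of entries of $Z$ to zero in a pattern respecting the relation $z_{i,j} = -z_{j,i}$. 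The map $\varphi$ should be designed so that every Pfaffian in $\Omega$ is sent to zero, while every Pfaffian in $\Pi(Z)\setminus\Omega$ is sent to a (nonzero) Pfaffian of $Z'$.

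Finally, I would verify that the images under $\varphi$ of the surviving standard monomials are $B$-linearly independent in $B[Z']$. This yields an injection $A/I(Z,\delta) \hookrightarrow B[Z']$, embedding the quotient in a domain and hence proving primality. The hardest step will be calibrating the specialization so that simultaneously (a) the vanishing locus of $\varphi$ on Pfaffians is exactly $\Omega$, and (b) the surviving standard monomials remain linearly independent in the target. For the generic matrix analogue \cite[Thm.~6.3]{MR953963}, the specialization zeroes a lower-left rectangular block, and linear independence follows from a diagonal leading-term argument on $B[Z']$; in the skew-symmetric setting one must instead zero entries in a principal symmetric pattern, which is more delicate, but the standard diagonal term order on Pfaffians still supplies distinct leading monomials for the surviving standard products.
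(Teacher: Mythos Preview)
The paper does not prove this proposition at all: it is simply recalled, with citation, from \cite[Thm.~2.1.12]{MR2298637} as one of three background results at the end of Section~\ref{section: order ideals}. So there is no in-paper argument to compare your proposal against.

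That said, your proposed argument has a genuine gap. The first step (the quotient $A/I(Z,\delta)$ is an ASL on $\Pi(Z)\setminus\Omega$ with the evident standard-monomial basis) is fine and standard. The trouble is the second step: for general $\delta$ there is \emph{no} specialization ``set some entries of $Z$ to zero'' that kills exactly the Pfaffians in $\Omega$ while leaving all Pfaffians $\geq\delta$ nonzero. Take $n=6$ and $\delta=[1,2,3,4]$. Then every $2$- and $4$-Pfaffian is $\geq\delta$, so $\Omega=\{[1,2,3,4,5,6]\}$ and $I(Z,\delta)=(\Pf(Z))$ is principal of degree~$3$. Your map $\varphi$ must send every $2$-Pfaffian $z_{i,j}$ to a nonzero element, so it cannot zero a single entry; hence $\varphi$ is the identity and $\Pf(Z)$ does not vanish. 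More generally, whenever the order ideal $\Omega$ contains no $2$-Pfaffian, the zero-a-block idea is dead on arrival, and the analogy with the rectangular block in the generic-matrix case breaks down.

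The proof in the cited reference (following De~Concini--Procesi and the determinantal template in \cite{MR953963}) does not try to embed the quotient in a polynomial ring by specialization. Instead it proceeds by induction: one observes that $\delta$ is the unique minimal element of $\Pi(Z)\setminus\Omega$, hence a non-zerodivisor in the quotient ASL, and then shows that the localization $(A/I(Z,\delta))[\delta^{-1}]$ is (a localization of) a polynomial extension of $B[Z']/I(Z',\delta')$ for a strictly smaller generic skew-symmetric matrix $Z'$ and a suitable $\delta'$. By induction the latter is a domain, hence so is the localization, hence so is $A/I(Z,\delta)$. If you want to repair your approach, this localization-and-induction scheme is the tool to reach for, not a one-shot specialization.
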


\section{Minors}\label{section: minors}

We are interested in ideals generated by certain sets of $t$-minors in a generic matrix $X$.
Specifically, we will require the generating minors to have at least $r_1$ rows in the first $R_1$ rows of $X$,
at least $r_2$ rows contained in the first $R_2$ rows of $X$, and so on;
and similarly for columns.

Let $X$ be a generic $m \times n$ matrix, $X = (x_{i,j})$ for $1 \leq i \leq m$, $1 \leq j \leq n$,
and fix $A = B[X] = B[\{x_{i,j}\}]$ for a commutative ring $B$ with unity.
For $1 \leq t \leq \min(m,n)$, a $t$-minor may be specified by listing its rows and columns;
we write $[a_1,\dotsc,a_t \mid b_1,\dotsc,b_t]$,
where $1 \leq a_1 < \dotsb < a_t \leq m$ and $1 \leq b_1 < \dotsb < b_t \leq n$,
for the minor with rows $a_1,\dotsc,a_t$ and columns $b_1,\dotsc,b_t$.

Fix sequences $1 \leq R_1 \leq \dotsb \leq R_p \leq m$
and $1 \leq C_1 \leq \dotsb \leq C_q \leq n$
where $p, q \geq 0$.
The sequences $R = (R_1,\dotsc,R_p)$ and $C = (C_1,\dotsc,C_q)$
(possibly empty if $p=0$ or $q=0$)
describe the division of $X$ into row and column blocks, respectively.
Specifically, let $X_{R_i}$ be the submatrix of $X$ consisting of the first $R_i$ rows
and let $X^{C_j}$ be the submatrix consisting of the first $C_j$ columns.
Fix also sequences $r = (r_1,\dotsc,r_p)$ and $c = (c_1,\dotsc,c_q)$.

We are interested in the $t$-minors that have at least $r_i$ rows contained in $X_{R_i}$
and at least $c_j$ columns contained in $X^{C_j}$, for each $i, j$
(with no restriction if $p=0$ or $q=0$).
\begin{thm}\label{thm: minors}
Let $B$ be a ring and $A = B[X]$.
Let $J = J(X,t,R,C,r,c)$ be the ideal generated by $t$-minors of $X$
that have at least $r_i$ rows contained in $X_{R_i}$ for each $1 \leq i \leq p$ (no restriction if $p=0$)
and at least $c_j$ columns contained in $X^{C_j}$ for each $1 \leq j \leq q$ (no restriction if $q=0$).
Then
\begin{equation}\label{eq: minor decomposition}
  J = I_t(X) \cap I_{r_1}(X_{R_1}) \cap \dotsb \cap I_{r_p}(X_{R_p})
             \cap I_{c_1}(X^{C_1}) \cap \dotsb \cap I_{c_q}(X^{C_q}) .
\end{equation}
\end{thm}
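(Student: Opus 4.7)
The plan is to realize each ideal in \eqref{eq: minor decomposition} as $AI$ for an order ideal $I$ in the ASL $A = B[X]$ on $\Delta(X) = P_{m,n}$, then apply Lemma~\ref{lemma: ASL order ideal intersection} iteratively. This reduces the theorem to a combinatorial identity of order ideals in $P_{m,n}$.

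First I would introduce the relevant order ideals: $\Omega_t := \{(A,B) : |A| \geq t\}$, $\Omega_i := \{(A,B) : |A| \geq r_i,\, a_{r_i} \leq R_i\}$ for $1 \leq i \leq p$, and $\Omega'_j := \{(A,B) : |B| \geq c_j,\, b_{c_j} \leq C_j\}$ for $1 \leq j \leq q$, each verified routinely to be an order ideal in $P_{m,n}$. Standard Laplace expansion gives $I_t(X) = A\Omega_t$ and $I_{r_i}(X_{R_i}) = A\Omega_i$: the nontrivial inclusion $A\Omega_i \subseteq I_{r_i}(X_{R_i})$ follows by Laplace-expanding any minor $(A,B) \in \Omega_i$ along its first $r_i$ rows $a_1, \dots, a_{r_i}$, all of which lie in $X_{R_i}$ by the defining inequality $a_{r_i} \leq R_i$. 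The analogue holds for $I_{c_j}(X^{C_j}) = A\Omega'_j$.

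Applying Lemma~\ref{lemma: ASL order ideal intersection} iteratively, the right-hand side of \eqref{eq: minor decomposition} equals $A\Omega^*$, where
\[
  \Omega^* := \Omega_t \cap \bigcap_i \Omega_i \cap \bigcap_j \Omega'_j = \bigl\{(A,B) \in P_{m,n} : |A| \geq t,\, a_{r_i} \leq R_i \text{ for all } i,\, b_{c_j} \leq C_j \text{ for all } j\bigr\}.
\]
It remains to show $J = A\Omega^*$. The containment $J \subseteq A\Omega^*$ is immediate, as every generator of $J$ lies in $\Omega^*$. For the reverse, it suffices to show that each minor $(A', B') \in \Omega^*$ belongs to $J$: when $|A'| = t$ the minor is itself a generator, and when $|A'| > t$ I would Laplace-expand $(A', B')$ along its first $t$ rows $a'_1, \dots, a'_t$ and argue by induction on $|A'|$—the term with column set $\{b'_1, \dots, b'_t\}$ factors as an explicit generator of $J$ times a cofactor, while the remaining terms can be absorbed into $J$ via the ASL straightening relations applied to the incomparable factors that arise. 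Verifying this last step rigorously is the principal technical obstacle; the remaining parts of the argument are routine consequences of the ASL framework together with Laplace expansion.
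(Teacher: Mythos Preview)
Your approach is exactly the paper's: realize each ideal on the right-hand side as $A\Omega$ for an explicit order ideal $\Omega\subset\Delta(X)$ and then invoke Lemma~\ref{lemma: ASL order ideal intersection}. The paper's proof is only a few lines and does not separately justify the inclusion $A\Omega^*\subseteq J$ that you flag---it treats $J=A\Omega^*$ as evident once $\Omega^*$ is identified with the set of minors (of size $\geq t$) satisfying all the row and column constraints---so your Laplace/straightening sketch already goes beyond what the paper spells out.
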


\begin{example}
When $p=q=0$, $J = I_t(X)$.

When $p=0$ and $q=1$, we are in the two-block setting of the Introduction.
If also $c_1=C_1$, we recover \cite[Thm.~A]{MR853243}.
\end{example}

\begin{remark}
We are essentially working with the special case of Mohammadi's block adjacent simplicial complexes \cite{Mohammadi:2012fk}
in which each block is contained in the previous one and they all have the last column of the matrix
as a common endpoint (in Mohammadi's indexing; for us, we take blocks to start at the first column or row).
Unlike Mohammadi, we allow non-maximal minors,
we allow restrictions on both the rows and columns appearing in the minor,
and we allow the overlaps between ``consecutive'' blocks to be arbitrarily large.
%
\end{remark}

\begin{proof}
Each of the following sets of minors is an order ideal in $\Delta(X)$:
\begin{enumerate}
\item The set of minors of size $\geq t$, the generating set of $I_t(X)$, is the order ideal generated
by $[m-t+1,\dotsc,m \mid n-t+1,\dotsc,n]$,
or cogenerated by $[1,\dotsc,t-1 \mid 1,\dotsc,t-1]$.
\item The set of $(\geq r_i)$-minors of $X_{R_i}$ is the order ideal generated by
$[R_i-r_i+1,\dotsc,R_i \mid n-r_i+1,\dotsc,n]$,
or cogenerated by $[1,\dotsc,r_i-1,R_i+1,\dotsc,n \mid 1,\dotsc,n-R_i+r_i-1]$.
\item Similarly, the set of $(\geq c_j)$-minors of $X^{C_j}$ is the order ideal generated by
$[m-c_j+1,\dotsc,m \mid C_j-c_j+1,\dotsc,C_j]$,
or cogenerated by $[1,\dotsc,n-C_j+c_j-1 \mid 1,\dotsc,c_j-1,C_j+1,\dotsc,n]$.
\end{enumerate}
By Lemma~\ref{lemma: ASL order ideal intersection},
the intersection of the ideals generated by these sets
is equal to the ideal generated by the intersection of the sets.
\end{proof}

Note, if $B$ is a domain this gives $J$ as an intersection of prime ideals.
However it may fail to be a primary decomposition of $J$,
as redundancies may arise in the following ways.
For example, if $r_j > R_j - R_i$ then every minor containing
at least $r_j$ rows of $X_{R_j}$ must contain at least $r_j - (R_j-R_i)$ rows of $X_{R_i}$;
now if $r_i \leq r_j - R_j + R_i$ then the condition imposed by $r_i$ is implied by the $r_j$ condition
and the prime ideal $I_{r_i}(X_{R_i})$ is redundant.
Or if $t - (m-R_i) \geq r_i$ then every $t$-minor has at least $r_i$ rows in $X_{R_i}$.
Finally there are a few trivial situations: if $r_i > R_i$ the whole thing is zero;
if $R_i = R_j$ or $r_i = r_j$ then one condition is obviously redundant.
These are the only possible redundancies as the following proposition shows.
\begin{prop}
Suppose
\begin{enumerate}
\item $R_1 < \dotsb < R_p$ and $C_1 < \dotsb < C_q$,
\item $r_1 < \dotsb < r_p < t$ and $c_1 < \dotsb < c_q < t$,
\item $0 \leq r_i \leq R_i$ for each $i$ and $0 \leq c_j \leq C_j$ for each $j$,
\item $R_1 - r_1 < \dotsb < R_p - r_p < m-t$ and $C_1 - c_1 < \dotsb < C_q - c_q < n-t$.
\end{enumerate}
Then the intersection \eqref{eq: minor decomposition} is irredundant.
\end{prop}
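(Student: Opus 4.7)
The plan is to reduce irredundancy to a combinatorial problem about order ideals in $\Delta(X)$ and then construct explicit witness minors for each factor.

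First I would observe that each ideal in \eqref{eq: minor decomposition} has the form $A\Omega$ for an order ideal $\Omega \subset \Delta(X)$, namely the three types identified in the proof of Theorem~\ref{thm: minors}. Iterating Lemma~\ref{lemma: ASL order ideal intersection} then shows that the intersection of any subfamily of these ideals equals $A$ times the intersection of the corresponding order ideals. Combined with the standard ASL fact that a minor $\delta \in \Delta(X)$ lies in $A\Omega$ if and only if $\delta \in \Omega$ (since $\delta$ is itself a standard monomial and standard monomials form a $B$-basis of $A$), this reduces the problem to showing that for each factor $A\Omega_i$ there is some minor $\delta_i \in \bigcap_{j \neq i}\Omega_j \setminus \Omega_i$.

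For the $I_t(X)$ factor the witness is $[1,\dotsc,t-1 \mid 1,\dotsc,t-1]$: its size is $t-1 < t$, while conditions (2)--(3) (namely $r_i \leq t-1$ and $r_i \leq R_i$, with the analogous column inequalities) guarantee it has at least $r_i$ rows in $X_{R_i}$ and at least $c_j$ columns in $X^{C_j}$ for all $i,j$.

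For each $I_{r_i}(X_{R_i})$ factor, I would construct a $t$-minor using columns $1,2,\dotsc,t$ (so every column condition is satisfied as above) whose row set contains exactly $r_i - 1$ elements of $X_{R_i}$ but at least $r_k$ elements of $X_{R_k}$ for every $k \neq i$. Setting $R_0 = 0$ and $R_{p+1} = m$, a natural choice places $\rho_\ell$ rows in the strip $\{R_{\ell-1}+1,\dotsc,R_\ell\}$ according to
\[
  \rho_\ell = \begin{cases}
    r_\ell - r_{\ell-1} & \ell < i \text{ or } i+1 < \ell \leq p, \\
    r_i - 1 - r_{i-1} & \ell = i, \\
    r_{i+1} - r_i + 1 & \ell = i+1 \text{ (when } i < p\text{)}, \\
    t - r_p & \ell = p+1,
  \end{cases}
\]
with $\rho_{p+1}$ bumped by $1$ in the boundary case $i = p$. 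A direct check from the strict inequalities in (2) and (4) verifies $0 \leq \rho_\ell \leq R_\ell - R_{\ell-1}$; the partial sums equal $r_k$ for $k < i$, equal $r_i - 1$ at $k = i$, and are at least $r_k$ for $k > i$; and they total $t$. A symmetric construction swapping rows and columns produces the witness for each $I_{c_j}(X^{C_j})$ factor.

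The main obstacle will be this bookkeeping: displacing one row out of $X_{R_i}$ forces a compensating extra row into some later strip, and verifying that this $+1$ actually fits requires all of the strict inequalities $R_\ell - r_\ell < R_{\ell+1} - r_{\ell+1}$ from (4) to cascade properly — in particular, the inequality $R_p - r_p < m - t$ is exactly what guarantees room in the bottom strip $\{R_p + 1, \dotsc, m\}$ when $i = p$.
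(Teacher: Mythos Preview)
Your proposal is correct and follows essentially the same strategy as the paper: reduce to order ideals in $\Delta(X)$ and exhibit an explicit witness minor for each factor, using $[1,\dotsc,t-1\mid 1,\dotsc,t-1]$ for the $I_t(X)$ term.

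The one noteworthy difference is in the witness for $I_{r_i}(X_{R_i})$. Your strip-by-strip distribution works, but the paper bypasses all of that bookkeeping with a much simpler row set: it takes the $t$-minor
\[
  [\,1,\dotsc,r_i-1,\;R_i+1,\dotsc,R_i+t-r_i+1 \mid 1,\dotsc,t\,],
\]
i.e.\ the first $r_i-1$ rows together with $t-r_i+1$ consecutive rows starting just past $R_i$. The single inequality $R_i-r_i<m-t$ from (4) guarantees this fits inside the matrix; for $j<i$ the row count in $X_{R_j}$ is $\min(r_i-1,R_j)\geq r_j$ by (2)--(3), and for $j>i$ it is either $t$ or $r_i-1+(R_j-R_i)\geq r_j$ by the single inequality $R_i-r_i<R_j-r_j$ from (4). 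So the ``cascading'' you anticipate as the main obstacle never arises.
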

\begin{proof}
First, fix $1 \leq i \leq p$.
Consider the $t$-minor
\[
  m = [1,\dotsc,r_i-1,R_i+1,\dotsc,t+R_i-r_i+1 \mid 1,\dotsc,t] .
\]
We use $R_i - r_i < m-t$ to verify $t+R_i-r_i+1 \leq m$, so this is a permissible $t$-minor in an $m \times n$ matrix.
For each $j < i$, $m$ has exactly $\min(r_i-1,R_j)$ rows in $X_{R_j}$, and this is $\geq r_j$,
so $m \in I_{r_j}(X_{R_j})$.
For each $j > i$, the number of rows of $m$ in $X_{R_j}$ is either $t$, if $t+R_i-r_i+1 \leq R_j$,
or else $R_j-R_i+r_i-1$, if $R_i+1 \leq R_j \leq t+R_i-r_i+1$.
In the first case $t \geq r_j$ and in the second case $R_j - r_j > R_i - r_i$, so $R_j - R_i + r_i - 1 \geq r_j$;
therefore $m \in I_{r_j}(X_{R_j})$.
And clearly $m$ has only $r_i-1$ rows in $X_{R_i}$.
This shows that
\[
  m \in I_t(X) \cap I_{r_1}(X_{R_1}) \cap \dotsb \cap I_{r_{i-1}}(X_{R_{i-1}})
    \cap I_{r_{i+1}}(X_{R_{i+1}}) \cap \dotsb \cap I_{r_p}(X_{R_p})
\]
but $m \notin I_{r_i}(X_{R_i})$.
Clearly $m \in \bigcap I_{c_j}(X^{C_j})$.
So the term $I_{r_i}(X_{R_i})$ is irredundant for each $i$.
The same argument shows that each $I_{c_j}(X^{C_j})$ is irredundant.

Finally consider the $(t-1)$-minor
\[
  m' = [1,\dotsc,t-1 \mid 1,\dotsc,t-1] .
\]
Since each $r_i < t$, $m'$ has at least $r_i$ rows in each $X_{R_i}$ and similarly at least $c_j$ columns in each $X^{C_j}$.
This shows that the term $I_t(X)$ is irredundant.
\end{proof}

\section{Minors of symmetric matrices}\label{section: symmetric minors}

Now let $Y = (y_{i,j})$ be a generic symmetric $n \times n$ matrix, $y_{i,j} = y_{j,i}$.
Fix sequences $R = (R_1,\dotsc,R_p)$ with $1 \leq R_1 \leq \dotsb \leq R_p \leq n$
and $r = (r_1,\dotsc,r_p)$.
Let $Y_{R_i}$ be the submatrix consisting of the first $R_i$ rows of $Y$.
We are interested in the $t$-minors that have at least $r_i$ rows in $Y_{R_i}$ for each $i$.
Note, at this point we allow all minors, not only doset minors.

\begin{thm}\label{thm: symmetric minors}
Let $B$ be a ring and $A = B[Y]$.
Let $J = J(Y,t,R,r)$ be the ideal generated by $t$-minors of $Y$ that have
at least $r_i$ rows contained in $Y_{R_i}$ for each $1 \leq i \leq p$.
Then
\begin{equation}\label{eq: symmetric minor decomposition}
  J = I_t(Y) \cap I_{r_1}(Y_{R_1}) \cap \dotsb \cap I_{r_p}(Y_{R_p}) .
\end{equation}
If $B$ is a domain then each $I_{r_i}(Y_{R_i})$ is a prime ideal.
\end{thm}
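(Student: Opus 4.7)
My plan is to mirror the proof of Theorem~\ref{thm: minors}, now working in the DASL setting for $A = B[Y]$ on $\Delta^s(Y) = D_n$. I will invoke Lemma~\ref{lemma: DASL order ideal intersection} in place of Lemma~\ref{lemma: ASL order ideal intersection}, and Proposition~\ref{prop: doset 1-cogenerated prime} for the primality claim.

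First I identify each ideal in \eqref{eq: symmetric minor decomposition} with the ring ideal generated by an explicit doset order ideal of $(D_n, \leq_1)$. Take
\[
\mathcal{I}_t = \{(A,B) \in D_n : |A| \geq t\}, \qquad
\mathcal{I}_i = \{(A,B) \in D_n : |A \cap [R_i]| \geq r_i\},
\]
and $\mathcal{J} = \mathcal{I}_t \cap \mathcal{I}_1 \cap \dotsb \cap \mathcal{I}_p$. Each of these is closed under $\leq_1$: if $A' \leq A$ in $P_n$ then $|A'| \geq |A|$ and $a'_j \leq a_j$ for all $j$, which preserves both the size and the row-count conditions. Moreover, $\mathcal{I}_i$ is cogenerated in $(D_n, \leq_1)$ by the single doset minor $(A_0, A_0)$ where $A_0 = \{1, \dotsc, r_i - 1, R_i + 1, \dotsc, n\}$, by a direct verification paralleling the non-symmetric case.

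The main technical step is the identification $A\mathcal{I}_t = I_t(Y)$, $A\mathcal{I}_i = I_{r_i}(Y_{R_i})$, and $A\mathcal{J} = J$. For the ``$\supseteq$'' direction in each case, the generators on the left are doset minors lying in the right-hand ring ideal: by iterated Laplace expansion, a doset minor $(A, B) \in \mathcal{J}$ of size $|A| > t$ can be expanded along a row $a \in A$ satisfying $a > \max\{R_i : |A \cap [R_i]| = r_i\}$ --- which exists because the assumption $r_i \leq t < |A|$ ensures $A$ is not contained in any such critical $[R_i]$ --- and each resulting $(|A|-1)$-minor still satisfies the row conditions, allowing induction down to $t$-minors in $J$. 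For ``$\subseteq$'', a generating $t$-minor $[A|B]$ with the row conditions is directly in $A\mathcal{J}$ if either $(A, B) \in D_n$ or $(B, A) \in D_n$ (using $[A|B] = [B|A]$ by symmetry of $Y$); if neither holds, the straightening relations of the DASL express $[A|B]$ as a $B$-linear combination of standard monomials whose leading doset factors $(\beta_1, \beta_2)$ are coordinate-wise bounded by $A$ or $B$ in $P_n$, so that $|\beta_1| \geq t$ and $|\beta_1 \cap [R_i]| \geq r_i$ are inherited.

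Given this identification, Lemma~\ref{lemma: DASL order ideal intersection} gives
\[
J = A\mathcal{J} = A\mathcal{I}_t \cap A\mathcal{I}_1 \cap \dotsb \cap A\mathcal{I}_p = I_t(Y) \cap I_{r_1}(Y_{R_1}) \cap \dotsb \cap I_{r_p}(Y_{R_p}).
\]
For primality, since $\mathcal{I}_i$ is cogenerated by the single doset minor $(A_0, A_0)$, Proposition~\ref{prop: doset 1-cogenerated prime} gives that $I_{r_i}(Y_{R_i}) = A\mathcal{I}_i$ is a prime ideal whenever $B$ is a domain. The main obstacle is the straightening step: one must verify that the standard representation of a non-doset $t$-minor produces doset factors still satisfying the row-count inequalities. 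I expect this to follow from condition (3) of the DASL definition --- which forces the first-coordinate sequence of any standard monomial in the expansion to be lexicographically dominated by a reordering of the input --- combined with the observation that ``coordinate-wise below'' is exactly what preserves both the size condition and the condition $|\beta_1 \cap [R_i]| \geq r_i$.
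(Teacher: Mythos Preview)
Your overall strategy is exactly the paper's: identify each ideal with the ring ideal generated by a doset order ideal, apply Lemma~\ref{lemma: DASL order ideal intersection}, and invoke Proposition~\ref{prop: doset 1-cogenerated prime} for primality via the single cogenerator $(A_0,A_0)$. That part is fine.

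The gap is precisely the ``main obstacle'' you flag at the end, and your proposed resolution does not work. Condition~(3) of the DASL definition governs the standard representation of a product $M = (\alpha_1,\alpha_2)\dotsm(\alpha_{2k-1},\alpha_{2k})$ in which each factor $(\alpha_{2i-1},\alpha_{2i})$ is already a doset element. A non-doset minor $[A \mid B]$ with $A$ and $B$ incomparable in $P_n$ is not such a product---it is not a doset element at all, nor a product of them---so the DASL axioms say nothing directly about its standard representation. You cannot conclude from the axioms alone that the leading factors $\beta_1$ sit below $A$ (or $B$) in $P_n$. The paper closes this gap by citing \cite[Lemma~2.3]{MR1279266} (Conca): every $t$-minor $[a \mid b]$ of a generic symmetric matrix is a $B$-linear combination of doset $t$-minors $[c \mid d]$ with $c \leq a$. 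This is a specific result about symmetric determinantal rings, not a consequence of the abstract DASL structure. Once you have it, the row-count conditions $|a \cap [R_i]| \geq r_i$ are inherited by each $c \leq a$, and both identifications $J = A\mathcal{J}$ and $I_{r_i}(Y_{R_i}) = A\mathcal{I}_i$ follow immediately. Replace your straightening argument with an appeal to this lemma and the proof is complete.
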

\begin{proof}
First, by \cite[Lemma~2.3]{MR1279266}, every $t$-minor $[a \mid b]$
is a linear combination of doset $t$-minors $[c \mid d]$ with $c \leq a$.
Thus we can take $J$ to be generated by the doset $t$-minors meeting the row conditions.

Next, each of the following sets is a doset order ideal in $\Delta^s(Y)$:
\begin{enumerate}
\item The set of doset minors of size $\geq t$ is the doset order ideal
generated by $[n-t+1,\dotsc,n \mid n-t+1,\dotsc,n]$.

\item The set of doset $(\geq r_i)$-minors of $Y_{R_i}$ is the doset order ideal
generated by $[R_i-r_i+1,\dotsc,R_i \mid n-r_i+1,\dotsc,n]$.
If $[a \mid b] \leq_1 [R_i-r_i+1,\dotsc,R_i \mid n-r_i+1,\dotsc,n]$
then $[a \mid b]$ involves at least $r_i$ rows of $Y_{R_i}$;
by Laplace expansion and \cite[Lemma~2.3]{MR1279266},
$[a \mid b]$ is a linear combination of doset $r_i$-minors of $Y_{R_i}$.
\end{enumerate}
This shows that $J$ is the indicated intersection.

The set of doset $(\geq t)$-minors of $Y$ is
cogenerated by $[1,\dotsc,t-1 \mid 1,\dotsc,t-1]$.
The set of doset $(\geq r_i)$-minors of $Y_{R_i}$
is cogenerated by $m = [1,\dotsc,r_i-1,R_i+1,\dotsc,n \mid 1,\dotsc,r_i-1,R_i+1,\dotsc,n]$
\cite[Remark~2.5(c)]{MR1279266}.
Indeed, $[a \mid b] \not \geq_1 m$ if and only if
$a \not \geq (1,\dotsc,r_i-1,R_i+1,\dotsc,n)$,
if and only if $|a| \geq r_i$ and $a_{r_i} \leq R_i$;
so $[a \mid b]$ involves at most $r_i$ rows of $Y_{R_i}$.
This shows that each of the ideals being intersected is cogenerated by a single doset element.
Therefore if $B$ is a domain then each of them is a prime ideal by Proposition~\ref{prop: doset 1-cogenerated prime}.
\end{proof}

If $B$ is a domain then once again this writes $J$ as an intersection of prime ideals,
but as before it may fail to be a primary decomposition because of redundancy.

\begin{prop}
Suppose
\begin{enumerate}
\item $R_1 < \dotsb < R_p$,
\item $r_1 < \dotsb < r_p < t$,
\item $0 \leq r_i \leq R_i$ for each $i$,
\item $R_1 - r_1 < \dotsb < R_p - r_p < n-t$.
\end{enumerate}
Then the intersection \eqref{eq: symmetric minor decomposition} is irredundant.
\end{prop}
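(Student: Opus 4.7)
The plan is to parallel the non-symmetric irredundancy proof, adapted to the DASL setting by choosing witnesses to be doset minors. For each $i$ we will exhibit a doset $t$-minor $m_i$ lying in $I_t(Y) \cap \bigcap_{j \neq i} I_{r_j}(Y_{R_j})$ but not in $I_{r_i}(Y_{R_i})$, and a doset $(t-1)$-minor $m'$ lying in $\bigcap_j I_{r_j}(Y_{R_j})$ but not in $I_t(Y)$. The key bookkeeping fact that makes this reduction possible is that each ideal appearing in the intersection is the ring-ideal of an explicit doset order ideal (identified in the proof of Theorem~\ref{thm: symmetric minors}), and a single doset minor is a degree-one DASL standard monomial, so by Lemma~\ref{lemma: DASL order ideal intersection} and the uniqueness of standard representations, such an element lies in $A \cdot I$ if and only if it already lies in $I$.

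The candidate is the diagonal doset minor
\[
  m_i = [1,\dotsc,r_i-1,R_i+1,\dotsc,t+R_i-r_i+1 \mid 1,\dotsc,r_i-1,R_i+1,\dotsc,t+R_i-r_i+1];
\]
equal row and column sequences force it to be a doset minor, and hypothesis (4) ensures $t+R_i-r_i+1 \leq n$, so $m_i$ exists. The row-counting is identical to the non-symmetric case: for $j<i$, the number of rows of $m_i$ in $Y_{R_j}$ is $\min(r_i-1, R_j) \geq r_j$ (using $r_j < r_i$ and $r_j \leq R_j$); for $j>i$, it is either $t \geq r_j$ or $R_j - R_i + r_i - 1 \geq r_j$, using $R_j - r_j > R_i - r_i$. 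Hence $m_i \in I_{r_j}(Y_{R_j})$ for every $j \neq i$, and $m_i \in I_t(Y)$ is trivial. For the irredundancy of $I_t(Y)$, take $m' = [1,\dotsc,t-1 \mid 1,\dotsc,t-1]$: it is a doset $(t-1)$-minor, so $m' \notin I_t(Y)$, while the conditions $r_j < t$ and $r_j \leq R_j$ give $\min(t-1, R_j) \geq r_j$ rows in each $Y_{R_j}$, so $m' \in I_{r_j}(Y_{R_j})$ for all $j$.

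The main obstacle, and the only place where the symmetric case needs anything beyond a transcription of the earlier proof, is to certify $m_i \notin I_{r_i}(Y_{R_i})$. By construction $m_i$ has exactly $r_i - 1$ rows in $Y_{R_i}$, so it is not in the doset order ideal consisting of doset minors with at least $r_i$ rows in $Y_{R_i}$ (this order ideal is the one generating $I_{r_i}(Y_{R_i})$, as described in the proof of Theorem~\ref{thm: symmetric minors}). Because $m_i$ is itself a degree-one DASL standard monomial, its standard representation is $m_i$ with coefficient $1$; membership in $A \cdot I$ would force $m_i \in I$, contradiction. This gives $m_i \notin I_{r_i}(Y_{R_i})$ and completes the irredundancy verification.
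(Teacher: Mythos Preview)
Your proof is correct and takes essentially the same approach as the paper, which simply says ``the proof is the same as before.'' Your adaptation---replacing the witness $[1,\dotsc,r_i-1,R_i+1,\dotsc,t+R_i-r_i+1 \mid 1,\dotsc,t]$ by the diagonal doset minor with the same row set, and justifying non-membership via the standard-basis description of $AI$---is exactly the natural way to make the non-symmetric argument go through in the DASL setting.
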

The proof is the same as before.

%

\section{Pfaffians}\label{section: pfaffians}

Let $Z = (z_{i,j})$ be an $n \times n$ generic skew-symmetric matrix,
so that $z_{i,j} = -z_{j,i}$ and $z_{i,i}=0$,
and let $A = B[Z] = B[\{z_{i,j}\}]$.
The Pfaffian of $Z$, denoted $\Pf(Z)$, is a certain polynomial in the entries of $Z$,
with the property that $\Pf(Z)^2 = \det(Z)$.
When $n$ is odd, $\Pf(Z) = \det(Z) = 0$; for $n = 2,4$ we have
\[
  \Pf\begin{pmatrix} 0 & a \\ -a & 0\end{pmatrix} = a,
  \qquad
  \Pf\begin{pmatrix} 0 & a & b & c \\ -a & 0 & d & e \\ -b & -d & 0 & f \\ -c & -e & -f & 0\end{pmatrix} = af-be+cd .
\]
In general, for $n$ even,
\[
  \Pf(Z) = \sum \operatorname{sgn}(\sigma) z_{\sigma(1),\sigma(2)} \dotsm z_{\sigma(n-1),\sigma(n)} ,
\]
where the sum is over all permutations $\sigma \in S_n$ such that $\sigma(2i-1) < \sigma(2i)$ for all $i$
and $\sigma(1) < \sigma(3) < \dotsb < \sigma(2n-1)$.
Equivalently, the sum is over all unordered partitions of $\{1,\dotsc,2n\}$ into pairs;
the restrictions on $\sigma$ simply amount to choosing one representative ordering for each partition.
There is a Laplace-like expansion: for each $j$, $1 \leq j \leq n$,
\[
  \Pf(Z) = \sum_{i<j} (-1)^{i+j+1} z_{i,j} \Pf(Z^{i,j}) + \sum_{i > j} (-1)^{i+j} z_{i,j} \Pf(Z^{i,j}) ,
\]
where $Z^{i,j}$ is the matrix obtained by deleting the $i$th and $j$th rows and columns of $Z$.
See \cite{MR0453723,MR0257105,MR554859}.

A $t$-Pfaffian of $Z$ is given by a list of $t$ rows and the same columns;
we write briefly $[a_1,\dotsc,a_t]$, where $1 \leq a_1 < \dotsb < a_t \leq n$,
for the Pfaffian of the skew-symmetric submatrix
given by the rows $a_1,\dotsc,a_t$ and the same columns.
Of course this is zero if $t$ is odd.

The ideal generated by the size $t$ Pfaffians of $Z$ is denoted $P_t(Z)$.
If $n$ is odd then $P_{n-1}(Z)$ is a prime ideal of height $3$.
More generally, $P_{2p}(Z)$ is a prime ideal of height $\mu(p,n) = (n-2p+1)(n-2p+2)/2$, see \cite{MR554859}.

We are interested in the ideal generated by the subset of Pfaffians with at least $r_1$ rows
in the first $R_1$ rows of $Z$, at least $r_2$ rows in the first $R_2$ rows of $Z$, and so on;
the row condition implies that these Pfaffians meet the corresponding column conditions as well,
i.e., at least $r_1$ columns in the first $R_1$ columns of $Z$, and so on.

Fix a sequence $1 \leq R_1 \leq \dotsb \leq R_p \leq n$, $R = (R_1,\dotsc,R_p)$,
and another sequence $r = (r_1,\dotsc,r_p)$ of the same length.
Let $Z_{R_i}$ be the submatrix of $Z$ consisting of the first $R_i$ rows
and let $Z_{R_i}^{R_i}$ be the $R_i \times R_i$ submatrix of $Z$ in the upper left corner,
consisting of the first $R_i$ rows and the first $R_i$ columns.
We will also need, for each $R_i+1 \leq k \leq n$,
the $(R_i+1) \times (R_i+1)$ submatrix given by the
first $R_i$ rows and columns plus the $k$th row and column,
that is, the set of rows (and columns) corresponding to the set $\{1,\dotsc,R_i,k\}$.
Recall the common notation $[R_i] = \{1,\dotsc,R_i\}$, so we may write $[R_i] \cup \{k\}$ for the set we want.
To simplify notation, we write $Z([R_i])$ for $Z_{R_i}^{R_i}$
and we write $Z([R_i] \cup \{k\})$ for the $(R_i+1) \times (R_i+1)$ skew-symmetric submatrix of $Z$
given by the rows (and columns) corresponding to the set $\{1,\dotsc,R_i,k\}$.
Since confusion seems unlikely we will drop the brackets and braces
and simply write $Z(R_i)$ and $Z(R_i \cup k)$.
Thus for example
\[
  Z(3\cup5) =
    \begin{pmatrix}
      0 & z_{1,2} & z_{1,3} & z_{1,5} \\
      -z_{1,2} & 0 & z_{2,3} & z_{2,5} \\
      -z_{1,3} & -z_{2,3} & 0 & z_{3,5} \\
      -z_{1,5} & -z_{2,5} & -z_{3,5} & 0
    \end{pmatrix} ,
\]
with rows and columns given by the set $3 \cup 5 = [3] \cup \{5\} = \{1,2,3,5\}$.
\begin{thm}\label{thm: pfaffians}
Let $B$ be a ring and $A = B[Z]$.
Let $J = J(Z,2t,R,r)$ be the ideal generated by $2t$-Pfaffians of $Z$
that have at least $r_i$ rows in $Z_{R_i}$ for $1 \leq i \leq p$.
For each $i$, if $r_i$ is even, let $J_i = P_{r_i}(Z(R_i))$,
and if $r_i$ is odd, let $J_i = \sum_{k=R_i+1}^n P_{r_i+1}(Z(R_i \cup k))$.
Then
\begin{equation}\label{eq: pfaffian decomposition}
  J = P_{2t}(Z) \cap J_1 \cap \dotsb \cap J_p .
\end{equation}
If $B$ is a domain then $P_{2t}(Z)$ is prime and each $J_i$ is a prime ideal.
\end{thm}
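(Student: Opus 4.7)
The plan is to mimic the strategy used for the symmetric case (Theorem~\ref{thm: symmetric minors}), translating each ideal appearing in \eqref{eq: pfaffian decomposition} into a ring ideal generated by an order ideal in the Pfaffian poset $\Pi(Z)$, and then invoking Lemma~\ref{lemma: ASL order ideal intersection} since $A = B[Z]$ is a graded ASL on $\Pi(Z)$.

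The first step is to identify each ideal as $A \cdot I$ for an explicit order ideal $I \subset \Pi(Z)$: $P_{2t}(Z)$ with $\{A \in \Pi(Z) : |A| \geq 2t\}$; each $J_i$ with $\{A \in \Pi(Z) : |A \cap [R_i]| \geq r_i\}$; and $J$ with the intersection of these, namely $\{A \in \Pi(Z) : |A| \geq 2t,\ |A \cap [R_i]| \geq r_i \text{ for all } i\}$. For $r_i$ even, the claim for $J_i = P_{r_i}(Z(R_i))$ is immediate since the $r_i$-Pfaffians with indices in $[R_i]$ generate the stated order ideal: if $B \leq A$ with $A \subset [R_i]$ and $|A| = r_i$, then $b_{r_i} \leq a_{r_i} \leq R_i$. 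For $r_i$ odd, the $(r_i+1)$-Pfaffians $[A]$ with $A \subset [R_i] \cup \{k\}$ for some $k > R_i$ all lie in $\{|A \cap [R_i]| \geq r_i\}$; conversely, any $B \in \Pi(Z)$ in that set has $|B| \geq r_i + 1$ (because $|B|$ is even while $r_i$ is odd), so $B \leq \{b_1, \ldots, b_{r_i}, b_{|B|}\}$, and this latter set is one of the generators: its first $r_i$ entries lie in $[R_i]$ by the hypothesis on $B$, so the whole set is contained in $[R_i] \cup \{b_{|B|}\}$, regardless of whether $b_{|B|}$ exceeds $R_i$. Intersecting these three order ideals reproduces the one describing $J$, and Lemma~\ref{lemma: ASL order ideal intersection} then yields \eqref{eq: pfaffian decomposition}.

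Next, assuming $B$ is a domain, I would deduce primality of each factor from the proposition that order ideals cogenerated by a single $\delta \in \Pi(Z)$ give prime ideals. For $P_{2t}(Z)$, take $\delta = [1, 2, \ldots, 2t-2] \in \Pi(Z)$. For $J_i$, the natural candidate, following the symmetric case, is $\delta = [1, \ldots, r_i-1, R_i+1, \ldots, n]$ of size $r_i - 1 + n - R_i$; if this quantity is odd, drop the last entry and use $\delta = [1, \ldots, r_i-1, R_i+1, \ldots, n-1]$ of size $r_i - 2 + n - R_i$ instead. In either case $\delta \in \Pi(Z)$, and a brief check shows that in $\Pi(Z)$ the up-set $\{A : \delta \leq A\}$ equals $\{|A \cap [R_i]| < r_i\}$: the bound $|A| \leq |\delta|$ is automatic whenever $a_{r_i} > R_i$, because parity of $|A|$ forces the maximum such $|A|$ in $\Pi(Z)$ to equal $|\delta|$.

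The main obstacle is the $r_i$-odd step in the order-ideal identification for $J_i$. In contrast to the even case, where $J_i$ is a single classical Pfaffian ideal of one submatrix, the odd case is a sum over $k > R_i$ of Pfaffian ideals of distinct submatrices, and one must verify that these scattered generators collectively generate the clean order ideal $\{|A \cap [R_i]| \geq r_i\}$. The key point is the explicit construction of the generator $[\{b_1, \ldots, b_{r_i}, b_{|B|}\}]$ above a given $B$, which uses the parity mismatch between $r_i$ odd and $|B|$ even in an essential way. Once this is secured, the parity bookkeeping in the primality step is elementary.
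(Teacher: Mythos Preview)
Your overall architecture matches the paper's: pass to order ideals in $\Pi(Z)$, invoke Lemma~\ref{lemma: ASL order ideal intersection} for the intersection, and then exhibit each factor as cogenerated by a single Pfaffian to deduce primality. The cogenerators you propose for $P_{2t}(Z)$ and for each $J_i$ are exactly those used in the paper, and your parity analysis there is the same.

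There is, however, a real gap in the step where you identify $J_i$ with $A\cdot I_i$ for $I_i = \{B \in \Pi(Z) : |B \cap [R_i]| \geq r_i\}$. What you actually prove, in both the even and odd cases, is that the defining generators of $J_i$ generate $I_i$ \emph{as an order ideal in the poset}: every $B \in I_i$ satisfies $B \leq A$ for some generator $A$. But in an ASL this poset relation does \emph{not} imply the ring-ideal membership $[B] \in ([A])$; for instance $[1,2] \leq [3,4]$ in $\Pi(Z)$, yet $z_{1,2} \notin (z_{3,4})$. So your argument only yields the trivial containment $J_i \subseteq A I_i$ (the generators of $J_i$ lie in $I_i$), not the needed reverse containment $A I_i \subseteq J_i$.

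The paper closes this gap not with a poset argument but with a polynomial one: it invokes a Laplace-type expansion to show that any Pfaffian $[B]$ with at least $r_i$ rows in $Z_{R_i}$ can be written as a $B[Z]$-linear combination of $r_i$-Pfaffians of $Z(R_i)$ (in the even case) or of $(r_i+1)$-Pfaffians of some $Z(R_i \cup k)$ (in the odd case). That expansion is exactly the missing ingredient; your construction of the dominating element $\{b_1,\dots,b_{r_i},b_{|B|}\}$ is suggestive but does not by itself produce such a linear combination.
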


\begin{proof}
Each of the following is an order ideal in $\Pi(Z)$:
\begin{enumerate}
\item The set of $2t$-Pfaffians is the order ideal generated by $[n-2t+1,\dotsc,n]$.
\item The set of Pfaffians (of all sizes) with at least $r_i$ rows contained in $Z_{R_i}$.
If $r_i$ is even, this is the order ideal generated by $[R_i-r_i+1,\dotsc,R_i]$.
If $r_i$ is odd, this is the order ideal generated by $[R_i-r_i+1,\dotsc,R_i,n]$.
\end{enumerate}
So, by Lemma~\ref{lemma: ASL order ideal intersection}, $J$ is equal to the intersection
of the ideals $P_{2t}(Z)$ and, for each $i$, the ideal generated by the Pfaffians (of any size)
having at least $r_i$ rows in $Z_{R_i}$.

If $r_i$ is even then the ideal generated by Pfaffians
with at least $r_i$ rows in $Z_{R_i}$ is $P_{r_i}(Z(R_i))$.
Indeed, if $P$ is any Pfaffian with at least $r_i$ rows in $Z_{R_i}$ then
$P$ can be expanded as a combination of $r_i$-Pfaffians
involving those rows.

If $r_i$ is odd and $P$ is any Pfaffian with at least $r_i$ rows in $Z_{R_i}$,
then either $P$ actually has at least $r_i+1$ rows in $Z_{R_i}$
or else $P$ involves at least one more row, say the $k$th row, with $k > R_i$.
Either way, $P$ can be expanded as a combination of $(r_i+1)$-Pfaffians in $Z(R_i \cup k)$.
So $P$ lies in the sum given in the statement.
Conversely, every Pfaffian generator of the sum in the statement must have at least $r_i$ rows in $Z_{R_i}$.

Now suppose $B$ is a domain.
The set of $(\geq 2t)$-Pfaffians of $Z$ is cogenerated by $[1,\dotsc,2t-2]$.
This shows $P_{2t}(Z)$ is prime.
(Of course $P_{2t}(Z)$ is already well-known to be prime.)

To see that each $J_i$ is prime, note that the order ideal of Pfaffians generating $J_i$ is cogenerated by
either $m = [1,\dotsc,r_i-1,R_i+1,\dotsc,n]$ or $m' = [1,\dotsc,r_i-1,R_i+1,\dotsc,n-1]$,
whichever has even length (regardless of whether $r_i$ is even or odd).
Let us verify this.
For simplicity, suppose that $m$ has even length.
We must show that $\alpha \not \geq m$ if and only if $\alpha$ has at least $r_i$ rows in $Z_{R_i}$,
equivalently $\alpha \geq m$ if and only if $\alpha$ has $r_i-1$ or fewer rows in $Z_{R_i}$;
note that this is the criterion whether $r_i$ is even or odd.
Now $\alpha \geq m$ if and only if $|\alpha| \leq r_i-1$ or $|\alpha| \geq r_i$ and $\alpha_{r_i} \geq R_i + 1$.
The forward direction is obvious; conversely, under these conditions,
$\alpha_{r_i+t} \geq \alpha_{r_i} + t \geq R_i + 1 + t = m_{r_i + t}$ for all $0 \leq t \leq |\alpha|-r_i$,
so each entry of $\alpha$ is at least as great
as the corresponding entry of $m$; and in particular since every entry of $\alpha$ is at most $n$, $|\alpha| \leq |m|$.
This shows that $\alpha \geq m$.
So indeed $\alpha \geq m$ if and only if $\alpha$ has $r_i-1$ or fewer rows in $Z_{R_i}$.

The argument in case $|m'|$ is even is similar.
Note that $m'$ is as long as possible for a member of $\Pi(Z)$ with $R_i+1$ in the $r_i$ position;
so if $\alpha_{r_i} \geq R_i+1$ then $|\alpha| \leq |m'|$.
\end{proof}

Once again this writes $J$ as a possibly redundant intersection of prime ideals, if $B$ is a domain.
\begin{prop}
Suppose
\begin{enumerate}
\item $R_1 < \dotsb < R_p$,
\item $r_1 < \dotsb < r_p < 2t$,
\item $0 \leq r_i \leq R_i$ for each $i$,
\item $R_1 - r_1 < \dotsb < R_p - r_p < n-2t$.
\end{enumerate}
Then the intersection \eqref{eq: pfaffian decomposition} is irredundant.
\end{prop}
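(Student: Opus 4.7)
The plan is to follow the template from the irredundancy proofs of Theorems~\ref{thm: minors} and \ref{thm: symmetric minors}: for each prime component in \eqref{eq: pfaffian decomposition}, I exhibit an explicit Pfaffian that lies in every other component but not in the designated one. The key combinatorial fact, already established in the proof of Theorem~\ref{thm: pfaffians} via the ASL structure on $\Pi(Z)$, is that a single Pfaffian $[a]$ lies in $J_i$ if and only if its index set contains at least $r_i$ elements $\leq R_i$, and lies in $P_{2t}(Z)$ if and only if $|a| \geq 2t$. So the task reduces to constructing Pfaffians with prescribed row-incidences.

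To witness that the factor $J_i$ cannot be dropped, I take the $2t$-Pfaffian
\[
  m_i = [1,2,\dotsc,r_i-1,\; R_i+1, R_i+2, \dotsc, R_i+2t-r_i+1],
\]
which is valid since hypothesis~(4) gives $R_i+2t-r_i+1 \leq n$. It lies tautologically in $P_{2t}(Z)$, and it has exactly $r_i-1$ rows in $Z_{R_i}$, so $m_i \notin J_i$. For $j<i$, hypotheses~(2) and~(3) yield $r_j \leq \min(r_i-1, R_j)$, so the initial block $\{1,\dotsc,r_i-1\}$ of $m_i$ already contributes $\geq r_j$ rows to $Z_{R_j}$. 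For $j>i$, one splits cases: either $R_j \geq R_i+2t-r_i+1$, in which case all $2t > r_j$ indices of $m_i$ lie in $Z_{R_j}$; or $R_j < R_i+2t-r_i+1$, in which case the count is $(r_i-1)+(R_j-R_i)$, and this is $\geq r_j$ because $R_j-r_j > R_i-r_i$ by hypothesis~(4). Thus $m_i$ lies in every other factor.

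For the $P_{2t}(Z)$ factor, I pick an even integer $s$ with $r_p \leq s \leq 2t-2$ and take $m' = [1,2,\dotsc,s]$: then $|m'| < 2t$ gives $m' \notin P_{2t}(Z)$, while $\min(s, R_i) \geq r_i$ (using $s \geq r_p \geq r_i$ and $R_i \geq r_i$) gives $m' \in J_i$ for each $i$. One can take $s = r_p$ when $r_p$ is even and $s = r_p+1$ when $r_p$ is odd with $r_p \leq 2t-3$. The point requiring care, and what I expect to be the main obstacle, is the parity-boundary case $r_p = 2t-1$: here no valid even $s < 2t$ exists, and in fact every generator of $J_p$ is already a $2t$-Pfaffian, forcing $J_p \subset P_{2t}(Z)$. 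So in writing up the proof one must either tacitly exclude this boundary case from the hypotheses or handle it as a genuine exception; away from it, the construction above completes the proof in direct analogy with the minor and symmetric-minor settings.
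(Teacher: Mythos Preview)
Your approach is exactly the one the paper intends: it simply says ``The proof is the same as before,'' meaning one transports the construction from the minors case, replacing the witness $[1,\dotsc,t-1\mid 1,\dotsc,t-1]$ by the $(2t-2)$-Pfaffian $[1,\dotsc,2t-2]$ and the witness $m$ by your $m_i$. Your verification of the $m_i$ part is fine and matches the minors argument line for line.

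The point you flag at the end is not just a detail to be smoothed over in the write-up; it is a genuine counterexample to the proposition as stated. If $r_p = 2t-1$ then $r_p$ is odd and $J_p = \sum_{k>R_p} P_{2t}(Z(R_p\cup k))$ is generated by $2t$-Pfaffians, so $J_p \subset P_{2t}(Z)$ and the factor $P_{2t}(Z)$ is redundant in \eqref{eq: pfaffian decomposition}. (Concretely, take $n=6$, $2t=4$, $p=1$, $R_1=4$, $r_1=3$: all four hypotheses hold, yet $P_4(Z)\cap J_1 = J_1$.) The paper's ``same as before'' overlooks this parity obstruction, since in the minors case the witness $[1,\dotsc,t-1\mid 1,\dotsc,t-1]$ always exists, whereas here the Pfaffian analogue $[1,\dotsc,2t-2]$ fails to lie in $J_p$ precisely when $r_p = 2t-1$. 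So your diagnosis is correct: the hypothesis should be strengthened to $r_p \leq 2t-2$ (equivalently, the direct analogue $m'=[1,\dotsc,2t-2]$ works), and with that amendment your proof is complete and identical in spirit to the paper's.
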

The proof is the same as before.

\begin{cor}
Let $Z = (z_{i,j})$ be a generic skew-symmetric matrix,
let $0 < p < q$,
and fix $A = B[Z]$ with degree $\deg z_{i,j} = 2p$ if $i,j \leq R$,
$\deg z_{i,j} = p+q$ if $i \leq R < j$,
and $\deg z_{i,j} = 2q$ if $i,j > R$.
Fix $t$ and $d$.
Let $r = \left\lceil \frac{2tq - d}{q-p} \right\rceil$.
Then $P_{2t}(Z)_{\leq d} = P_{2t}(Z) \cap I_r$
where $I_r$ is the ideal generated by Pfaffians with at least $r$ rows in $Z_R$.
If $r$ is even, $I_r = P_r(Z(R))$.
If $r$ is odd, $I_r = \sum_{k=R+1}^n P_{r+1}(Z(R \cup k))$
where $Z(R \cup k)$ is the $(R+1) \times R+1$ skew-symmetric submatrix of $Z$ given by the rows (and columns)
corresponding to the set $\{1,\dotsc,R,k\}$.
If $B$ is a domain then $P_{2t}(Z)$ and $I_r$ are prime.
\end{cor}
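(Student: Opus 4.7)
The strategy parallels the analogous corollary for minors: identify $P_{2t}(Z)_{\leq d}$ as the ideal generated by those $2t$-Pfaffians of degree $\leq d$, show that this set is exactly the set of $2t$-Pfaffians meeting a single row-block condition with threshold $r$, and invoke Theorem~\ref{thm: pfaffians} with a single row block.

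The first step is a standard homogeneity argument: with the prescribed grading, $P_{2t}(Z)$ is a homogeneous ideal of $A = B[Z]$, so any form $f \in P_{2t}(Z)$ of degree $\leq d$ decomposes into homogeneous summands, each expressible as $\sum g_\alpha P_\alpha$ over Pfaffian generators $P_\alpha$ with $\deg g_\alpha + \deg P_\alpha$ equal to that summand's degree; this forces $\deg P_\alpha \leq d$. Hence $P_{2t}(Z)_{\leq d}$ is generated by the individual $2t$-Pfaffians of degree $\leq d$.

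Next, I would compute the degree of a $2t$-Pfaffian having exactly $s$ row/column indices in $[R]$. Each monomial in the Pfaffian expansion corresponds to a perfect matching of the $2t$ chosen indices; writing $a$, $b$, $c$ for the numbers of low-low, low-high, and high-high pairs, we have $2a+b = s$ and $b + 2c = 2t - s$, so each monomial has degree
\[
  2pa + (p+q)b + 2qc = 2tq - s(q-p),
\]
independent of the matching. Thus the Pfaffian is homogeneous of that degree, and is of degree $\leq d$ exactly when $s \geq (2tq - d)/(q-p)$, i.e.\ $s \geq r$. Combining, $P_{2t}(Z)_{\leq d}$ is generated by the $2t$-Pfaffians having at least $r$ rows in $Z_R$, and Theorem~\ref{thm: pfaffians} applied with a single row block ($R_1 = R$, $r_1 = r$) yields the intersection $P_{2t}(Z) \cap I_r$, the explicit description of $I_r$ according to the parity of $r$, and primality over a domain. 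The only substantive step is verifying that the monomial degree depends only on $s$ and not on the chosen matching; the rest is direct bookkeeping.
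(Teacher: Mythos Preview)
Your proposal is correct and follows the same approach as the paper: compute the degree of a $2t$-Pfaffian in terms of the number $s$ of row indices in $[R]$, observe that $\deg P \leq d$ is equivalent to $s \geq r$, and invoke Theorem~\ref{thm: pfaffians} with a single block. The paper's own proof is a one-line remark asserting that a $2t$-Pfaffian with $r$ rows in $Z_R$ has degree $pr + q(2t-r)$ and that $r$ is the least integral solution to $\deg P \leq d$; your matching argument simply supplies the justification (which the paper omits) that each Pfaffian is homogeneous in this non-standard grading.
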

Indeed, a $2t$-Pfaffian $P$ with $r$ rows in $Z_R$ has degree $pr+q(2t-r)$;
the value of $r$ in the statement is the least integral solution to $\deg P \leq d$.

\bibliographystyle{amsplain}

\providecommand{\bysame}{\leavevmode\hbox to3em{\hrulefill}\thinspace}
\providecommand{\MR}{\relax\ifhmode\unskip\space\fi MR }
\providecommand{\MRhref}[2]{%
  \href{http://www.ams.org/mathscinet-getitem?mr=#1}{#2}
}
\providecommand{\href}[2]{#2}

\end{document}